\numberwithin{equation}{section}
\DeclareMathOperator{\const}{const}
\newcommand{\R}{\mathbb{R}}
\newcommand{\cB}{{\mathcal B}}
\newcommand{\supp}{\mathrm{supp}}
\newtheorem{theorem}{Theorem}[section]{\bf}{\it}
\newenvironment{matheorem}[1]{\begin{theorem}}{\end{theorem}}
\newtheorem{corollary}[theorem]{Corollary}{\bf}{\it}
{\it}{\rm}
\newtheorem{lemma}[theorem]{Lemma}{\bf}{\it}
\newtheorem{remark}[theorem]{Remark}{\it}{\rm}
\newtheorem{definition}[theorem]{Definition}{\bf}{\it}
\newtheorem{assumption}{Assumption}{\bf}{\it}
{\bf}{\it}
{\bf}{\it}
\title[Iterative schemes for bump solutions in a neural field model]{Iterative schemes for bump solutions in a neural field model}
\author[A.~Oleynik]{Anna Oleynik}
\address{A.~Oleynik, Department of Mathematical Sciences and Technology and
Center for Integrative Genetics, Norwegian University of Life
Sciences, N-1432 $\AA$s, Norway\\and\\
Department of Mathematics, Uppsala University, 751 06 Uppsala, Sweden}
\email{anna.oleynik@inbox.com}
\author[A.~Ponosov]{Arcady Ponosov}
\address{Department of Mathematical Sciences and Technology and
Center for Integrative Genetics, Norwegian University of Life
Sciences, N-1432 $\AA$s, Norway}
\email{arkadi.ponossov@umb.no}
\author[J.~Wyller]{John Wyller}
\address{J.~Wyller,Department of Mathematical Sciences and Technology and
Center for Integrative Genetics, Norwegian University of Life
Sciences, N-1432 $\AA$s, Norway}
\email{john.wyller@umb.no}
\keywords{Neural field models, iteration schemes for bumps, monotone operators in ordered Banach spaces}
\begin{document}
\maketitle

\begin{abstract}
We develop two iteration schemes for construction of localized stationary solutions (bumps) of a one-population Wilson-Cowan model with a smoothed Heaviside firing rate function.
The first scheme is based on the fixed point formulation of the stationary Wilson-Cowan model. The second one is formulated in terms of the excitation width of a bump.
Using the theory of monotone operators in ordered Banach spaces we justify
convergence of both iteration schemes.
\end{abstract}

\section{Introduction}

Neural field models have been the subject of mathematical attention since the publications \cite{WC1,WC2,A1,A}. These models typically take the form of integro-differential equations.
We consider a one-population neural field model of the Wilson-Cowan type \cite{WC1,WC2,A1,A,Coombes2005}
\begin{equation}
\label{model}
u_t=-u+\int \limits_{-\infty}^{+\infty}\omega(y-x)f(u(y,t)-h)dy.
\end{equation}
Here $u(x,t)$ represents the activity of population, $f$ the firing-rate function, $\omega$ the connectivity function, and $h$ the firing threshold. For review on the model \eqref{model} see \cite{Coombes2005}.  Existence and stability of spatially localized solutions and traveling waves are commonly studied for the case when  the firing rate function is given by the unit step function \cite{A,Coombes2005,Coombes&Owen}. However, the results for the case when the firing rate function is smooth are few and far between \cite{KA,Coombes&Schmidt,Faugeras2008,Faugeras}.\\
\newline
In the mathematical neuroscience community time-independent spatially localized solutions of \eqref{model} are referred to as \emph{bumps}. The motivation for studying bumps stems from the fact that they are believed to be linked to the mechanisms of a short memory \cite{G-R1995}.  In the case when $f$ is given as a unit step function, one can find analytical expressions for the bump solutions \cite{A}. In principle, bumps solutions can also be constructed when the firing rate function is  smooth provided the Fourier-transform of the connectivity function is a real rational function. In that case the model can be converted to a higher order nonlinear differential equation which can be represented as a Hamiltonian system. The bumps are represented then by homoclinic orbits within the framework of these systems. See for example \cite{Elvin2010,LT2003,Krisner}.\\
\newline
Kishimoto and Amari \cite{KA} have proved the existence of bump solutions of \eqref{model} when $f$ is a smooth function of a special type (smoothed Heaviside function), using the Schauder fixed point theorem. The Schauder fixed point theorem, however, does not give a method for construction of the bumps.
Pinto and Ermentrout in \cite{Pinto&Ermentrout} constructed bumps using singular perturbation analysis. However, this method is quite involved,  and is restricted to the lateral-inhibitory connectivity (i.e., $\omega$ is assumed to be continuous, integrable and even, with $\omega(0) > 0$ and exactly one positive zero). Coombes and Schmidt in \cite{Coombes&Schmidt} developed an iteration scheme for constructing bumps of the model \eqref{model} with  a smoothed Heaviside function. They, however, did not give a mathematical verification of their approach. Apart from the work of Coombes and Schmidt \cite{Coombes&Schmidt}, the authors of the present paper do not know about other attempts to develop iterative algorithms for the construction of bumps.
Thus there is a need for a more rigorous analysis of iteration schemes for bumps. This serves as a motivation for the present work.\\
\newline
We present two different iteration schemes for constructing bumps. The first one is based on the fixed point problem introduced in \cite{KA}. The second scheme, which is modification of the procedure introduced in \cite{Coombes&Schmidt}, is an iteration scheme for the excitation width of the bumps. We prove that both schemes converge using the theory of  monotone operators in ordered Banach spaces.\\
\newline
The present paper is organized in the following way: In Section \ref{sec:I} the properties of the one-population Wilson-Cowan model are reviewed with emphasis on the results of Kishimoto and Amari \cite{KA}. In Section \ref{sec:Preliminaries} some necessary mathematical preliminaries are introduced. Section \ref{Sec:II} is devoted to the study of a direct iteration scheme based on the fixed point problem proposed by of Kishimoto and Amari \cite{KA}. In Section \ref{Sec:Numerics:1} we illustrate the results with a numerical example.  In Section \ref{sec:IIa} we introduce a fixed problem based on the specific representation of the firing rate function studied in \cite{Coombes&Schmidt}. The fixed problem is formulated for the crossing between bumps and a shifted parameterized threshold value $h+t,$ $t\geq 0$. The bump solution can be restored from these crossings. We prove that there is a fixed point which can be obtained by iterations. We provide an numerical example in Section \ref{Sec:Numerics:2}. In Section \ref{sec:Discussion} we summarize our findings and describe open problems.

\section{Model}
\label{sec:I}
Let $f : \R \rightarrow [0, 1]$ be an arbitrary non-decreasing function.
We assume that the connectivity function  $\omega $ satisfies the following conditions:
\begin{itemize}
\item[(i)] $\omega$ is symmetric, i.e. $\omega(-x)=\omega(x),$
\item[(ii)]$\int_{\R}|\omega(x)|dx<\infty,$ i.e., $ \omega \in L^1(\R),$
\item[(iii)]$\omega$ is continuous and bounded, i .e., $\omega \in BC(\R),$
\item[(iv)] $\omega$ is differentiable a.e. with bounded derivatives, i.e., $\omega \in W^{1,\infty}(\R).$
\end{itemize}
The examples of  such a function are
\begin{equation}
\label{eq:Mexican-hat}
\omega(x)=K e^{-kx^2}-M e^{-mx^2}, \quad 0<M<K,\; 0<m<k,
\end{equation}
and
\begin{equation}
\label{eq:w-ex2}
\omega(x)=e^{-b|x|}(b\sin|x|+\cos x), \quad b>0.
\end{equation}

In Fig.\ref{Fig1}(a) we illustrate the function given in \eqref{eq:Mexican-hat} with parameters
$K=1.5,$ $k=2,$ and $M=m=1.$  In Fig.\ref{Fig1}(b) we illustrate the function in \eqref{eq:w-ex2} with $b=0.3.$
The function \eqref{eq:Mexican-hat} models the lateral-inhibition coupling and is often called as a Mexican-hat function, e.g., see \cite{A,LT2003,Coombes2005}. The model with periodically modulated spatial connectivity given by \eqref{eq:w-ex2}  was considered in \cite{LTGE2002, Coombes&Schmidt}.
\newline
\begin{figure}[h]
\centering
\subfigure[]{
\scalebox{0.5}{\includegraphics{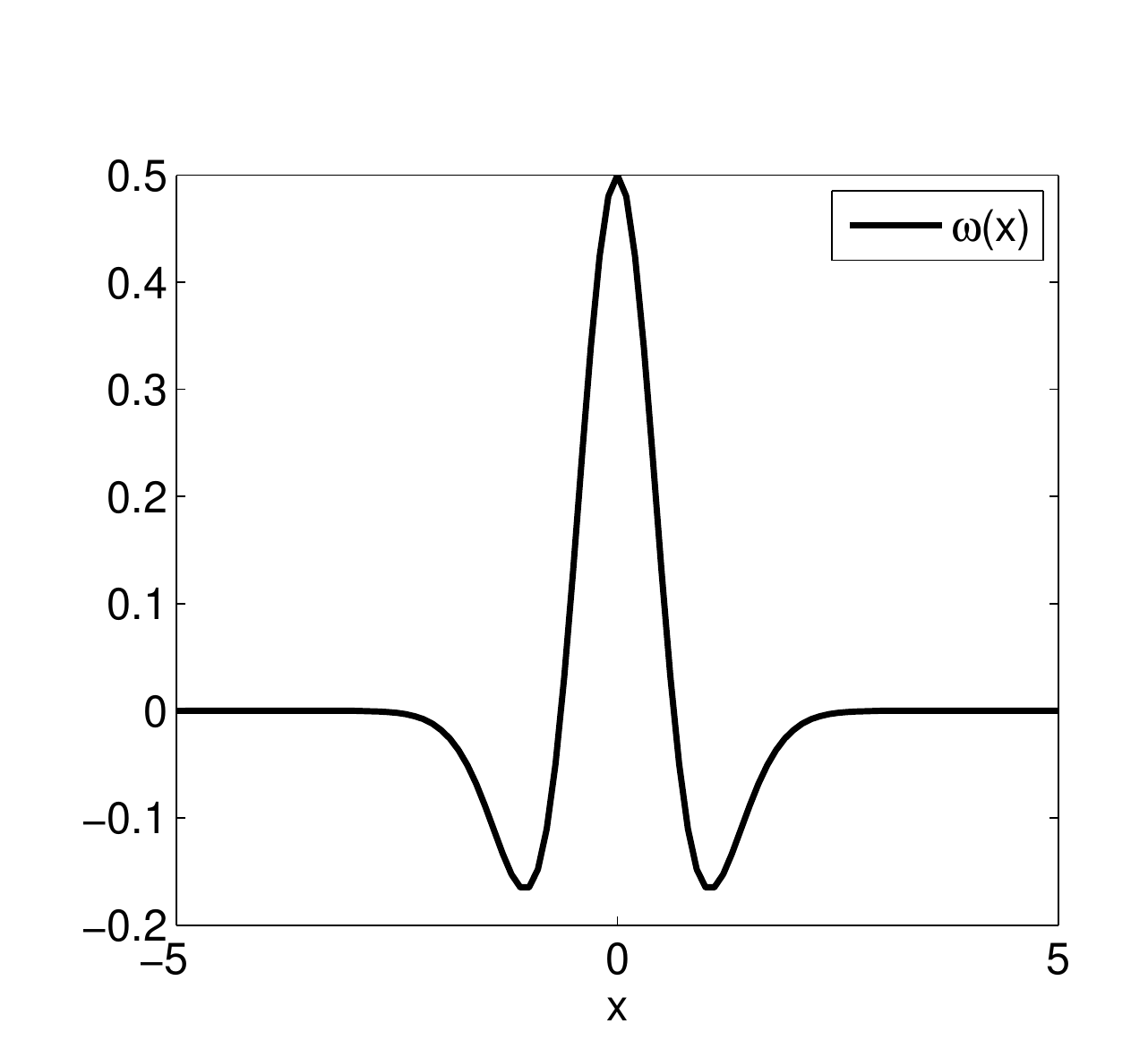}}
}
\subfigure[]{
\scalebox{0.5}{\includegraphics{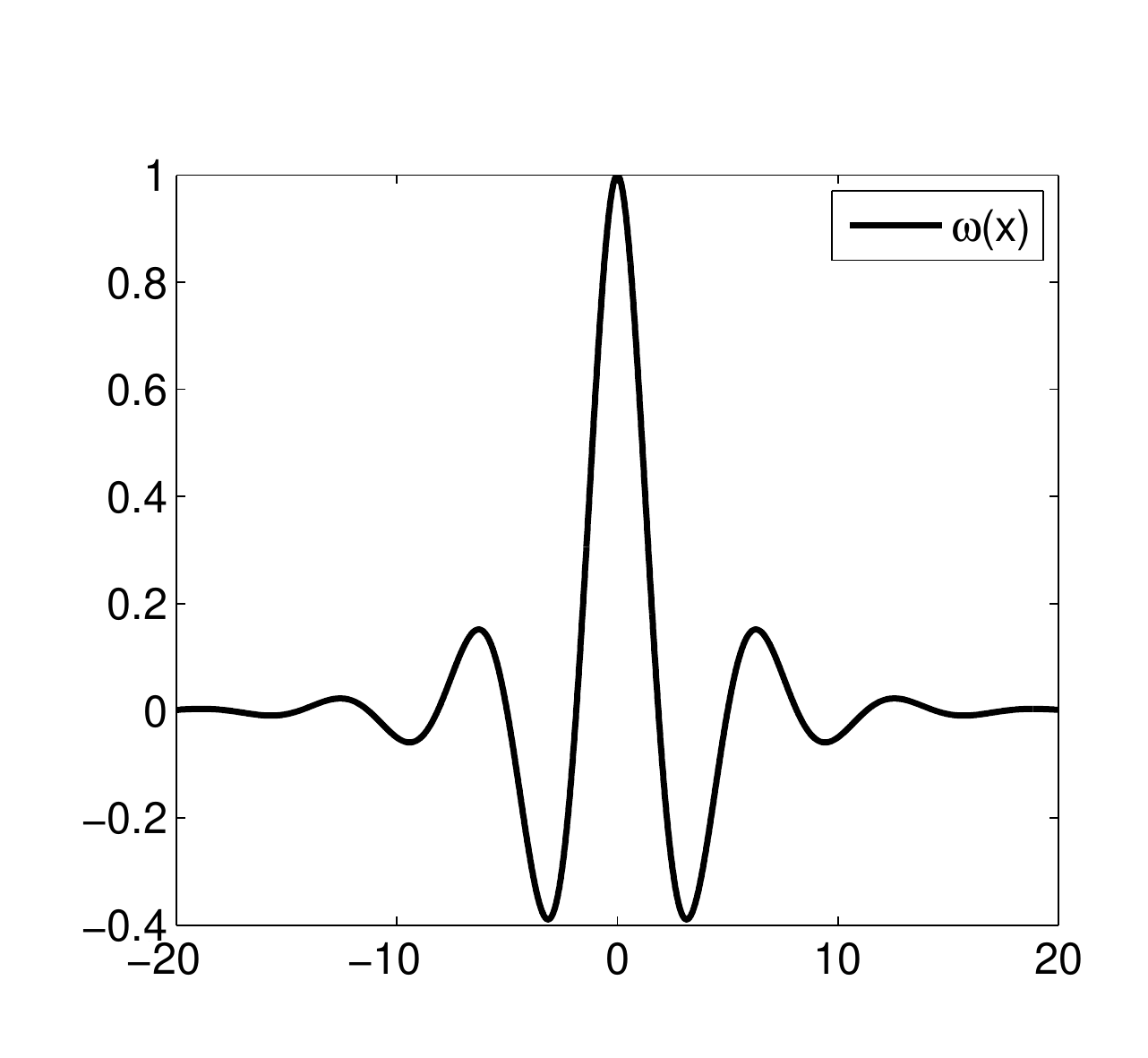}}
}
\caption{Examples of the connectivity function $\omega$: (a) The Mexican-hat function \eqref{eq:Mexican-hat}, and (b) the function \eqref{eq:w-ex2}, with the parameters given in the text.}\label{Fig1}
\end{figure}

Stationary solutions of \eqref{model} are given as solutions to the integral equation
\begin{equation}
\label{steady state}
u(x)=\int \limits_{-\infty}^{+\infty}w(y-x)f(u(y)-h)dy.
\end{equation}

We note the following properties of  \eqref{steady state}:
\begin{itemize}
\item{} A solution $u$ is translation invariant. That is, if $u(x)$ is a solution to \eqref{steady state}, so is $u(x-c)$ for arbitrary constant $c \in \R.$
\item{} A symmetric solution to \eqref{steady state} can be expressed as
\begin{equation}
\label{steady state:2}
u(x)=\int \limits_{0}^{+\infty}r(x,y)f(u(y)-h)dy
\end{equation}
where
\begin{equation*}
r(x,y)=w(y-x)+w(y+x).
\end{equation*}
\end{itemize}
Let the function $f$ be given as the unit step function
\begin{equation}
\label{eq:Theta}
f=\theta, \quad \theta(u)=\left\{
\begin{array}{ll}
0,& u<0\\
1,&u\geq0.
\end{array} \right.
\end{equation}
Amari \cite{A} was the first who observed that in this case, the spatially localized solutions to \eqref{steady state} can be explicitly constructed. Following \cite{A} we introduce the following definitions:
\begin{definition}
The set $R[u]=\{x|\, \,  u(x)>h\}$ is called the excited region of $u(x),$ \cite{A}.
\end{definition}

\begin{definition}
\label{def:bump}
An equilibrium solution $u(x)$ of \eqref{model} with $f=\theta$ is called a bump with the width $a,$ if the excited region of $u$ is an interval of the length $a$, i.e., $R[u]=(a_1,a_2),$  where $a=a_2-a_1.$
\end{definition}
Then a bump solution with the width $a$ is given as
\begin{equation*}
u(x)=\int\limits_{a_1}^{a_2}\omega(y-x)dy, \quad a=a_2-a_1.
\end{equation*}
Due to translation invariance of \eqref{steady state} we without loss of generality consider bumps defined on a symmetric interval, i.e.,
\begin{equation*}
u(x)=\int \limits_{-a/2}^{a/2}\omega(y-x)dy, \quad a_{2}=-a_{1}=a/2.
\end{equation*}
It is easy to see that $u(x)$ in this form is a symmetric function. Indeed, letting $z=-y$  we have
\begin{equation*}
u(-x)=-\int \limits_{a/2}^{-a/2}\omega(-z+x)dz=\int \limits_{-a/2}^{a/2}\omega(z-x)dz=u(x).
\end{equation*}
Thus, using \eqref{steady state:2} a bump solution can be written as
\begin{equation}
u(x)=\int \limits_0^{a/2}r(x,y)dy.
\end{equation}
\newline
We define a new function $\Phi$
\begin{equation*}
\Phi(x,y)=\int_0^y r(x,z)dz, \: x,y \in \R, y>0
\end{equation*}
with
\begin{equation}
\label{eq:dPhi}
\dfrac{\partial \Phi}{\partial x}(x,y)=\omega(y+x)-\omega(y-x), \quad \dfrac{\partial \Phi}{\partial y}(x,y)=r(x,y).
\end{equation}
We conveniently express bumps by means of the function $\Phi$:
\begin{theorem}
\label{th:a-solutions}
Let $h>0$ be fixed. The  model \eqref{model} with the firing-rate function $f=\theta$ possesses a bump solution if and only if there exist a width, $a,$ such that
\begin{equation}
\label{eq:antideriv}
\Phi(a/2, a/2)\equiv\int_0^a w(y)dy=h
\end{equation}
and
\begin{itemize}
\item[(i)] $\Phi(x,a/2) \leq h ,\: \forall x> a/2,$
\item[(ii)] $\Phi(x,a/2) \geq h, \: \forall x \in [0, a/2).$
\end{itemize}
The bump solution is given then as  $u(x)=\Phi(x,a/2).$
\end{theorem}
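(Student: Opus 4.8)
The statement is an equivalence, so the plan is to verify each implication by direct computation, exploiting the single structural fact that for $f=\theta$ a stationary solution reproduces itself by convolving $\omega$ with the indicator of its own super-level set: since $\theta(u(y)-h)=1$ precisely when $u(y)\ge h$, equation \eqref{steady state} reads $u(x)=\int_{\{y:\,u(y)\ge h\}}\omega(y-x)\,dy$. Thus the whole problem reduces to a self-consistency condition between the \emph{shape} $u=\Phi(\cdot,a/2)$ and the \emph{location} of the set $\{u\ge h\}$.

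For the direct implication I would take a bump $u$ of width $a$ and, using translation invariance of \eqref{steady state}, center its excited region so that $R[u]=(-a/2,a/2)$. Because $\omega\in BC(\R)\cap L^1(\R)$ the convolution $u$ is continuous, so passing to the boundary of the open excited region gives $u(\pm a/2)=h$. Since $\{y:\,u(y)\ge h\}$ then coincides with $[-a/2,a/2]$ up to a null set, the fixed-point equation collapses to $u(x)=\int_{-a/2}^{a/2}\omega(y-x)\,dy$, which by the definitions of $r$ and $\Phi$ together with $\omega(-x)=\omega(x)$ is exactly $\Phi(x,a/2)$. The relation $u(a/2)=h$ then becomes $\Phi(a/2,a/2)=\int_0^a\omega(y)\,dy=h$, while $u\le h$ off the excited region and $u\ge h$ on it translate verbatim into conditions (i) and (ii).

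For the converse, given $a$ with $\int_0^a\omega(y)\,dy=h$ and (i)--(ii), I set $u:=\Phi(\cdot,a/2)$. From $\Phi(x,y)=\int_0^y[\omega(z-x)+\omega(z+x)]\,dz$ and the symmetry of $\omega$ one sees $\Phi$ is even in $x$, so $u$ is symmetric; evenness promotes (i)--(ii) to the two-sided statements $u\ge h$ on $[-a/2,a/2]$ and $u\le h$ on $\{|x|\ge a/2\}$, with $u(\pm a/2)=h$. Consequently $\{u\ge h\}=[-a/2,a/2]$ up to a null set, so $\int_{\R}\omega(y-x)\theta(u(y)-h)\,dy=\int_{-a/2}^{a/2}\omega(y-x)\,dy=\Phi(x,a/2)=u(x)$, i.e.\ $u$ solves \eqref{steady state}; the same inequalities show $R[u]=(-a/2,a/2)$ is an interval of length $a$, so $u$ is a bump.

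The delicate point, common to both directions, is the identification $\{u\ge h\}=[-a/2,a/2]$. Conditions (i)--(ii) are non-strict, so a priori $u$ could equal $h$ on a set of positive measure: inside $(-a/2,a/2)$, which would disconnect the excited region and violate ``interval of length $a$'', or outside $[-a/2,a/2]$, which would enlarge the effective support of $\theta(u-h)$ and break the fixed-point identity. To exclude this I would use the regularity inherited from $\omega\in W^{1,\infty}(\R)$: differentiating under the integral gives $u\in C^1$ with $u'(x)=\omega(x+a/2)-\omega(a/2-x)$, so at every density point of $\{u=h\}$ one has $u'=0$; hence $\{u=h\}$ can carry positive measure only where this derivative vanishes on a set of positive measure. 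Ruling out such degeneracy — a transversality condition on the crossings at $\pm a/2$, implicit in the notion of a genuine bump — pins the super-level set to $[-a/2,a/2]$ exactly and closes the argument.
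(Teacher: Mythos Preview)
The paper does not supply a proof of this theorem; it is stated as a background result (essentially Amari's classical characterization of bumps for the Heaviside firing rate) and then used without further justification. So there is no ``paper's own proof'' to compare against.

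Your argument is the natural one and is correct in substance: you use translation invariance to center the bump, continuity of the convolution to pin down $u(\pm a/2)=h$, and the identity $\int_{-a/2}^{a/2}\omega(y-x)\,dy=\Phi(x,a/2)$ to read off \eqref{eq:antideriv} and conditions~(i)--(ii) in the forward direction, then reverse the steps for the converse. Your derivative computation $u'(x)=\omega(x+a/2)-\omega(a/2-x)$ is exactly \eqref{eq:dPhi} with $y=a/2$.

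You are also right to flag the non-strict inequalities in (i)--(ii) as the only genuine subtlety. As written, the theorem is slightly loose: with merely $\Phi(x,a/2)\ge h$ on $[0,a/2)$ one cannot conclude $R[u]=(-a/2,a/2)$ without excluding the degenerate possibility that $u\equiv h$ on a subset of positive measure, and likewise for the exterior. Your Lebesgue-density argument reduces this to $\omega(x+a/2)=\omega(a/2-x)$ on a set of positive measure, which is precisely the failure of transversality at the crossings. The paper (and Amari's original) implicitly works in the generic situation where this does not occur; later assumptions in the paper (e.g.\ Assumption~\ref{As:3}, which forces $\partial\Phi/\partial x<0$ near the relevant widths) are exactly the kind of transversality hypothesis that rules it out. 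So your caveat is well placed, and your proposal is complete modulo that standing nondegeneracy convention.
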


The stability of bumps has been studied using the Amari approach \cite{A} and  the Evans function technique, \cite{Coombes2005}. Here we present the result based on \cite{A}:
\begin{theorem}
\label{th:stability}
Let $h>0$ be fixed, $f=\theta,$ and there exist a bump with the width $a.$ The bump is linearly stable if $\omega(a)<0$ and unstable if $\omega(a)>0.$
\end{theorem}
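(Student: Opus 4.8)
The plan is to follow the linearization approach of Amari \cite{A}. I would write a perturbed solution as $u(x,t)=u(x)+v(x,t)$, where $u(x)=\Phi(x,a/2)$ is the bump, substitute into \eqref{model}, and keep only terms linear in $v$. The sole nonlinearity is $\theta(u(y,t)-h)$, and since $\theta'=\delta$ (the Dirac distribution), its linearization is $\delta(u(y)-h)\,v(y,t)$. Because $u(y)-h$ vanishes precisely at the two boundary points $y=\pm a/2$ of the excited region, the perturbation is felt only through the values of $v$ there; this is the decisive structural fact that renders the spectral problem finite rank.

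To make the localization explicit I would use the composition rule $\delta(u(y)-h)=|u'(a/2)|^{-1}\bigl(\delta(y-a/2)+\delta(y+a/2)\bigr)$, which is legitimate as soon as the bump crosses the threshold transversally, i.e.\ $u'(\pm a/2)\neq0$. From \eqref{eq:dPhi} one computes $u'(a/2)=\omega(a)-\omega(0)$, and by symmetry $u'(-a/2)=-u'(a/2)$; a nondegenerate bump satisfies $u'(a/2)<0$, so $|u'(a/2)|=\omega(0)-\omega(a)>0$. The linearized evolution then reads
$$v_t=-v+\frac{1}{\omega(0)-\omega(a)}\bigl[\omega(x-a/2)\,v(a/2,t)+\omega(x+a/2)\,v(-a/2,t)\bigr].$$

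Next I would seek normal modes $v(x,t)=e^{\lambda t}\psi(x)$ and evaluate the resulting identity at $x=\pm a/2$. Writing $P=\psi(a/2)$ and $Q=\psi(-a/2)$, this yields a $2\times2$ eigenvalue problem with matrix $\left(\begin{smallmatrix}\omega(0)&\omega(a)\\ \omega(a)&\omega(0)\end{smallmatrix}\right)$, whose eigenvalues $\omega(0)\pm\omega(a)$ belong to the symmetric vector $(1,1)$ and the antisymmetric vector $(1,-1)$. Solving $(\omega(0)-\omega(a))(\lambda+1)=\omega(0)\pm\omega(a)$ gives the antisymmetric mode $\lambda=0$ --- the neutral translation mode expected from translation invariance, with eigenfunction proportional to $u'(x)$ --- and the symmetric mode $\lambda_\ast=2\omega(a)/(\omega(0)-\omega(a))$. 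Since the denominator is positive, $\sign\lambda_\ast=\sign\omega(a)$.

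Finally I would verify that nothing else can destabilize the bump: away from the threshold crossings the localized term vanishes and the operator reduces to $-\mathrm{Id}$, so the remaining (essential) spectrum sits at $\lambda=-1$ and is harmless, the localized part being a finite-rank, hence compact, perturbation. Thus the only eigenvalue that can cross the imaginary axis is $\lambda_\ast$, and the bump is linearly stable when $\omega(a)<0$ and unstable when $\omega(a)>0$, as claimed. The main obstacle is rigor at the discontinuity: the formal identity $\theta'=\delta$ must be underpinned by Amari's observation that, as long as $u(\cdot,t)$ stays near the bump, its dynamics are equivalent to tracking the two threshold-crossing points $x_1(t),x_2(t)$, whose linearized motion reproduces exactly the $2\times2$ system above; one must also bear in mind that the $\lambda=0$ mode makes the conclusion a statement about the sign of the decisive eigenvalue rather than about asymptotic stability in the strict sense.
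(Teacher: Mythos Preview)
The paper does not supply its own proof of this theorem; it merely quotes the result from Amari~\cite{A} (and mentions the Evans-function route of~\cite{Coombes2005}). Your argument is precisely the Amari linearization that the paper is citing: reduce the linearized dynamics to the two threshold crossings, obtain the $2\times2$ system with matrix $\bigl(\begin{smallmatrix}\omega(0)&\omega(a)\\ \omega(a)&\omega(0)\end{smallmatrix}\bigr)$, and read off the translation mode $\lambda=0$ and the width mode $\lambda_\ast=2\omega(a)/(\omega(0)-\omega(a))$. The computation is correct, including the identification $u'(a/2)=\omega(a)-\omega(0)$ from~\eqref{eq:dPhi}, and your caveat about the transversality hypothesis $u'(\pm a/2)\neq0$ and the purely formal nature of $\theta'=\delta$ is exactly the point Amari handles by tracking the interface positions directly; nothing further is needed here.
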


The  firing-rate function we treat here is of the following type, \cite{KA}
\begin{equation}
\label{f(u)}
f(u)=\left\{
\begin{array}{ll}
0, & u\leq 0\\
\phi(u),& 0<u<\tau \\
1,& u\geq \tau
\end{array} \right.,
\end{equation}
where $\tau>0,$ $\phi$ is an arbitrary continuous, monotonically increasing, and normalized  function such that
\begin{equation*}
\phi(0)=0, \quad  \phi(\tau)=1.
\end{equation*}
The example of  such a function is
\begin{equation}
\label{eq:logoid}
f(u)=\Sigma\left({u}/{\tau},p\right),\quad
\Sigma(u,p)=\left\{
\begin{array}{ll}
0, & u\leq 0\\
\dfrac{u^p}{u^p+(1-u)^p},& 0<u<1 \\
1,& u\geq 1
\end{array} \right., \; p>0,
\end{equation}
where $\Sigma(\cdot,p) \in C^{[p]}(\R)$  and $[p]$ denotes the integer part of $p.$
\newline

We need the following definition:
 \begin{definition}
$R^*[u]=\{x|u(x)> h+\tau\}$ is called a maximally excited region, and $R^-[u]=\{x|h<u(x)<h+\tau\}$ is an incompletely excited region, \cite{KA}.
\end{definition}

\begin{definition}
\label{def:f-bump}
An equilibrium solution $u(x)$ of \eqref{model} with $f$ given by \eqref{f(u)} is called a bump if $R^*[u]$ is the interval surrounded by an incompletely excited region $R^{-}[u],$ i.e.,
$R[u]=R^*[u]\cup R^{-}[u]$ being another interval, \cite{KA}.
 \end{definition}

Thus, by Definition \ref{def:f-bump} the function $u(x)$ displayed graphically in Fig.\ref{Fig2} can be a bump to \eqref{model} with $f$ given in \eqref{f(u)} when $\tau=\tau_1,$ whereas for $\tau=\tau_2$ it can not be a bump.

\begin{figure}[h]
\centering
\scalebox{0.5}{\includegraphics{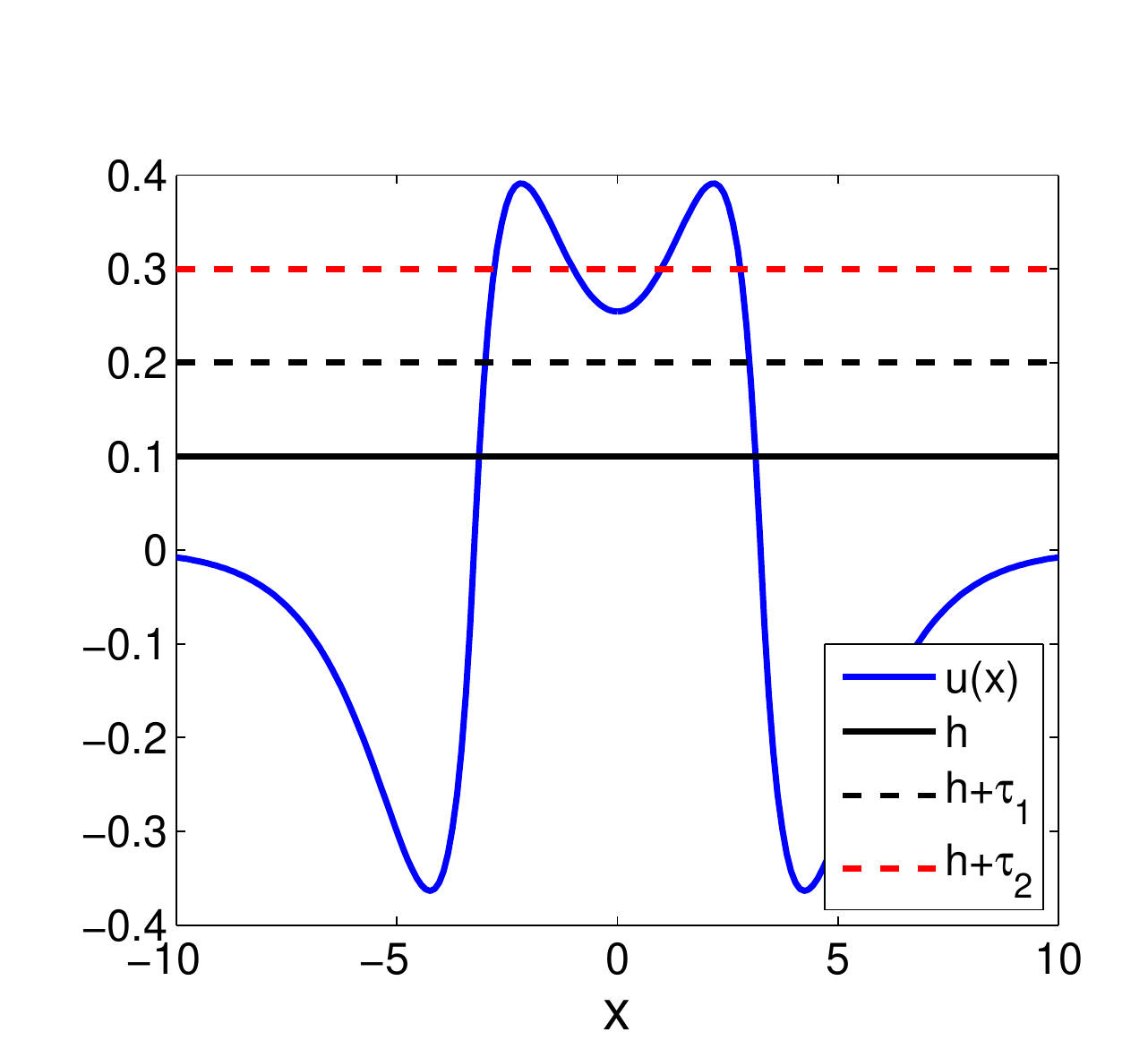}}
\caption{The graph of a function $u(x)$ which satisfies Definition \ref{def:f-bump} when $\tau=\tau_1$ and which does not satisfy it when $\tau=\tau_2$. }\label{Fig2}
\end{figure}
Let $f$ be given as \eqref{f(u)}, $f_0(u)=\theta(u),$ and $f_\tau(u)=\theta(u-\tau).$  To distinguish between bump solutions to \eqref{model} with different firing rate functions, we use the following terminology: the neural field with the firing rate functions $f_0,$ $f_\tau,$ and  $f$  is  called a $f_0$-field,  $f_\tau$-field, and  $f$-field, respectively.
We observe that $f_\tau$-field  is equivalent to the $f_0$-field  with the new threshold  value $h+\tau,$ and
 \begin{equation*}
 \label{f0,f,fh}
 f_\tau(u) \leq f(u)\leq f_0(u).
 \end{equation*}

The original idea of Kishimoto and  Amari \cite{KA} is to use bump solutions of the $f_0$- and $f_\tau$-fields to prove the existence (and stability) of  bumps in the $f$-field.  If $\omega$ has a Mexican-hat shape (e.g., see  Fig.\ref{Fig1}(a)) then the $f_0$-field ($f_\tau$-field) possesses two symmetric bumps for moderate values of $h$, one stable and one unstable bump. In \cite{KA} it was shown, using the Schauder fixed point theorem, that there exists a bump solution of $f$-field if both $f_0$- and $f_\tau$-fields possess linearly stable bumps and $\omega$ has a Mexican-hat shape (i.e., the connectivity function can have the shape as in Fig.\ref{Fig1}(a) but not as in  Fig.\ref{Fig1}(b)).  Moreover, if  $\phi$  is a differentiable function it was  shown that the  $f$-field bump is stable.

Notice that the differentiability of $\phi$  can  be replaced by a weaker assumption, namely differentiability almost everywhere, i.e.,  $\phi \in W^{1,1}([0,\tau]).$  Then, the firing-rate function \eqref{f(u)} can be represented as in \cite{Coombes&Schmidt}, i.e.,
\begin{equation}
\label{f(u):Coombes}
f(u)=\int \limits_{-\infty}^{+\infty}\rho(\xi)\theta(u-\xi)d\xi,
\end{equation}
with $\theta$ given by \eqref{eq:Theta}, $\supp\{\rho\}=[0,\tau],$ and $\rho$  is positive and normalized $\int\limits_{-\infty}^{\infty}\rho(x)dx=1.$
\newline

In this paper we prove existence of bumps in the $f$-field, and introduce two iteration methods for their construction.
We improve the existence result obtained in \cite{KA} by relaxing on the assumption that $\omega$ has a Mexican-hat shape. We also do not require the bumps of the $f_0$- and $f_\tau$-field be stable as it is assumed in \cite{KA}.
 So far there have been two methods used to construct bumps in $f$-field: One is based on the singular perturbation analysis, \cite{Pinto&Ermentrout}. This method is quite involved and, moreover, it restricts the choice of  $\omega$ to functions of a Mexican-hat shape. The other method is to convert \eqref{steady state} to a higher order nonlinear
differential equations which can be represented as a Hamiltonian system. The bumps then
are given by homoclinic orbits within the framework of these systems, see \cite{Elvin2010,LT2003,Krisner}.
This method requires  the Fourier transform of $\omega$  to be a real rational function. Thus, it can not be applied in some cases, as for example  in the case of \eqref{eq:Mexican-hat}.
We do not requite $\omega$ to have either Mexican-hat shape or real rational Fourier transform to be able to apply our iteration schemes.

We use the following assumptions:

\begin{assumption}\label{As:1}
 There exists a bump with the width $2\Delta_0$ of the $f_0$-field model, and  a bump with the width $2\Delta_\tau$ to the $f_\tau$-field model. Moreover, the widths are such that
$\Delta_\tau<\Delta_0.$
\end{assumption}
To illustrate this assumption let us assume that there is a bump solution of the $f_0$-field model, i.e., $\Phi(\Delta_0, \Delta_0)=h,$ see Theorem \ref{th:a-solutions}. Then, by the inverse function theorem there exists a value $\tau>0$ such that $\Phi(\Delta_\tau, \Delta_\tau)=h+\tau$ for some  $\Delta_\tau<\Delta_0,$  if  $\omega(2\Delta)<0$ in some vicinity of $\Delta_0.$ In this case both bumps are stable by Theorem \ref{th:stability}. However, Assumption \ref{As:1} can be satisfied  even when the situation described above does not take place, i.e., the condition $\omega(2\Delta)<0$ is not fulfilled for all $\Delta \in [\Delta_\tau,\Delta_0],$ see for example Fig.\ref{Fig3}.

Under Assumption \ref{As:1} bumps for the $f_0$-field model and the $f_\tau$-field model are, in accordance with Theorem \ref{th:a-solutions}, given as
\begin{equation*}
\begin{array}{ll}
u_0(x)= \Phi (x, \Delta_0) & u_\tau(x)=\Phi(x,\Delta_\tau).
\end{array}
\end{equation*}

\begin{assumption}\label{As:2}
The function $r(x,y)$ is non-negative for all $x,y \in [\Delta_\tau, \Delta_0].$
\end{assumption}
We get the following relationship between $u_\tau$ and $u_0$:
\begin{lemma}
\label{lemma:u_tau<u_0}
Under Assumption \ref{As:2}  we have  $u_\tau\leq u_0$ on $[\Delta_\tau, \Delta_0].$
\end{lemma}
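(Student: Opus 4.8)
The plan is to express the difference $u_0 - u_\tau$ as a single integral of $r$ over the interval $[\Delta_\tau,\Delta_0]$ and then apply Assumption \ref{As:2} directly under the integral sign. Since, by Assumption \ref{As:1} and Theorem \ref{th:a-solutions}, the two bumps are $u_0(x)=\Phi(x,\Delta_0)$ and $u_\tau(x)=\Phi(x,\Delta_\tau)$ with $\Delta_\tau<\Delta_0$, I would start from the defining formula $\Phi(x,y)=\int_0^y r(x,z)\,dz$ and write
\begin{equation*}
u_0(x)-u_\tau(x)=\Phi(x,\Delta_0)-\Phi(x,\Delta_\tau)=\int_{\Delta_\tau}^{\Delta_0} r(x,z)\,dz .
\end{equation*}
Equivalently, this follows from the identity $\partial\Phi/\partial y=r$ recorded in \eqref{eq:dPhi} together with the fundamental theorem of calculus; the regularity of $\omega$ (in particular $\omega\in BC(\R)$, so that $r$ is continuous) makes both the integral and this manipulation legitimate.

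Next I would fix $x\in[\Delta_\tau,\Delta_0]$ and observe that, in the integral above, the dummy variable $z$ ranges precisely over $[\Delta_\tau,\Delta_0]$. Hence both arguments of the integrand $r(x,z)$ lie in $[\Delta_\tau,\Delta_0]$, which is exactly the region in which Assumption \ref{As:2} guarantees $r\geq 0$. Therefore the integrand is non-negative throughout the range of integration, so the integral is non-negative, giving $u_0(x)-u_\tau(x)\geq 0$, i.e. $u_\tau(x)\leq u_0(x)$ for every $x\in[\Delta_\tau,\Delta_0]$, which is the assertion.

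There is essentially no analytic obstacle here; the statement is a direct consequence of the monotonicity of $y\mapsto\Phi(x,y)$ on the range where $r$ is positive. The only point requiring care is the bookkeeping of domains: one must check that restricting $x$ to $[\Delta_\tau,\Delta_0]$ forces the evaluation of $r$ only at pairs $(x,z)$ with both coordinates in $[\Delta_\tau,\Delta_0]$, so that the pointwise sign information from Assumption \ref{As:2} is available under the integral. Once the two bump widths are ordered as in Assumption \ref{As:1}, this matching is automatic and the conclusion follows.
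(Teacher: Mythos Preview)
Your proof is correct and follows exactly the same approach as the paper: write $u_0(x)-u_\tau(x)=\int_{\Delta_\tau}^{\Delta_0} r(x,y)\,dy$ and apply Assumption~\ref{As:2} to conclude non-negativity. Your explicit check that both arguments of $r$ lie in $[\Delta_\tau,\Delta_0]$ is a detail the paper leaves implicit, but otherwise the arguments are identical.
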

\begin{proof}
We get
\begin{equation*}
u_0(x)-u_\tau(x)=\int_0^{\Delta_0}r(x,y)dy -\int_0^{\Delta_\tau}r(x,y)dy=\int_{\Delta_\tau}^{\Delta_0}r(x,y)dy\geq 0.
\end{equation*}
\end{proof}

In this paper we will only consider bump solutions of the $f$-field such that
\begin{equation}
\label{eq:u-spec}
u(x)>h+\tau, \; \forall x\in R[u_\tau-\tau], \quad u(x)<h \; \forall x\not\in R[u_0].
\end{equation}


\section{Mathematical Preliminaries}\label{sec:Preliminaries}
Let $K$ be a cone in a real Banach space $\cB$ and $\leq$ be a partial ordering defined by $K.$ Let $w,v \in \cB$ be such that $w \leq v.$ Then a set of all $g\in \cB$ such that $w\leq g\leq v,$ defines an ordered interval which we denote $\llbracket w,v \rrbracket.$

The theoretical foundation of the iteration schemes presented in Section \ref{Sec:II} and Section \ref{sec:IIa} are based on the following general results:
\begin{theorem} \label{th:G&L:1}
Let $w_0, v_0 \in \cB,$ $w_0\leq v_0$ and $A: \llbracket w_0, v_0 \rrbracket \rightarrow \cB$ be an increasing operator ($Aw \leq Av$ provided $w\leq v$ for any $w,v \in \cB$) such that
\begin{equation*}
w_0 \leq Aw_0, \: Av_0 \leq v_0.
\end{equation*}
Suppose that one of the following two conditions is satisfied:
\begin{itemize}
\item[(H1)] $K$ is normal and $A$ is condensing;
\item[(H2)] $K$ is regular and $A$ is semicontinuous, i.e., $x_n\rightarrow x$ strongly implies $Ax_n \rightarrow Ax$ weakly.
\end{itemize}

Then $A$ has a maximal fixed point $x^{*}$ and a minimal fixed point $x_{*}$ in $\llbracket w_0, v_0 \rrbracket;$ moreover
\begin{equation*}
x^{*}=\lim\limits_{n \rightarrow \infty}v_n, \: x_{*}=\lim\limits_{n \rightarrow \infty}w_n,
\end{equation*}
where $v_n=Av_{n-1}$ and $w_n=Aw_{n-1},$ $n=1,2,3...,$ and
$$ w_0 \leq w_1 \leq ...\leq w_n \leq ...\leq v_n \leq...\leq v_1 \leq v_0.$$
See \cite{Guo&La}.
\end{theorem}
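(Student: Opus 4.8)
The plan is to run the monotone iteration generated by $A$ from the two endpoints and to show that the resulting sequences converge to the extremal fixed points.

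First I would establish the nested chain of inequalities by induction, using only that $A$ is increasing. Since $w_0 \leq Aw_0 = w_1$ and $Av_0 = v_1 \leq v_0$, applying $A$ repeatedly gives $w_{n} \leq w_{n+1}$ and $v_{n+1}\leq v_n$; and from $w_0\leq v_0$ one obtains $w_n \leq v_n$ for every $n$. This simultaneously shows that each iterate lies in $\llbracket w_0, v_0\rrbracket$, so that $A$ may legitimately be applied at every step, and produces the claimed ordering $w_0\leq w_1 \leq\cdots\leq v_n \leq\cdots\leq v_1\leq v_0$.

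The heart of the proof is to upgrade this order-monotonicity to norm convergence, and this is where the two alternative hypotheses enter; I expect it to be the main obstacle. Under (H2) the regularity of $K$ gives convergence directly: an order-bounded monotone sequence in a regular cone converges in norm, so $w_n \to x_*$ and $v_n \to x^*$ for some $x_*, x^* \in \llbracket w_0, v_0\rrbracket$. Under (H1) I would instead argue with the Kuratowski measure of noncompactness $\alpha$. Put $S = \{w_n\}\cup\{v_n\}$; normality of $K$ makes the order interval bounded, hence $S$ is bounded, while $A(S) = S\setminus\{w_0,v_0\}$ differs from $S$ by finitely many points, so $\alpha(A(S)) = \alpha(S)$. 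Were $\alpha(S)>0$, the condensing property would force $\alpha(A(S))<\alpha(S)$, a contradiction; thus $\alpha(S)=0$ and $S$ is precompact. A monotone sequence in a normal cone possessing a convergent subsequence converges to the same limit, since for $n\geq n_k$ the normality estimate $\|x_*-w_n\|\leq N\|x_*-w_{n_k}\|$ applies; hence again $w_n\to x_*$ and $v_n\to x^*$.

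With strong convergence in hand the remaining steps are routine. Passing to the limit in $w_{n+1}=Aw_n$ using continuity of $A$ (which is built into the notion of a condensing operator) under (H1), or semicontinuity together with uniqueness of weak limits under (H2), yields $Ax_*=x_*$ and $Ax^*=x^*$; closedness of $K$ also guarantees $x_*, x^* \in \llbracket w_0,v_0\rrbracket$. Finally, extremality follows by comparison: if $x$ is any fixed point in $\llbracket w_0,v_0\rrbracket$, then $w_0\leq x\leq v_0$, and monotonicity of $A$ gives $w_n\leq x\leq v_n$ for all $n$; taking limits and using closedness of $K$ yields $x_*\leq x\leq x^*$. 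Hence $x_*$ and $x^*$ are the minimal and maximal fixed points, respectively.
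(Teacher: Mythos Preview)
The paper does not supply its own proof of this theorem: the statement is quoted from the monograph of Guo and Lakshmikantham \cite{Guo&La} and used as a black box in the preliminaries section, with the citation ``See \cite{Guo&La}'' standing in lieu of an argument. Your sketch is a faithful rendition of the standard proof found there --- monotone iteration to obtain the ordered chain, then the dichotomy between regularity (direct convergence of order-bounded monotone sequences) and the normality/condensing route via the measure of noncompactness, followed by the extremality comparison --- so there is nothing to contrast on the level of strategy.

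One small point worth tightening: in the (H1) branch you assert that continuity of $A$ ``is built into the notion of a condensing operator.'' This is true in the convention of \cite{Guo&La}, where condensing maps are by definition continuous, but it is worth stating explicitly since some authors define condensing purely in terms of the noncompactness inequality. Also, in deducing $\|x_*-w_n\|\le N\|x_*-w_{n_k}\|$ you implicitly use $w_n\le x_*$ for all $n$; this follows because for any fixed $n$ one may choose $n_j\ge n$ along the convergent subsequence, giving $w_n\le w_{n_j}\to x_*$ and hence $w_n\le x_*$ by closedness of $K$. With those two clarifications your argument is complete.
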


From Theorem \ref{th:G&L:1} we get the following corollary.
\begin{corollary}
\label{corollary:1}
If under the conditions of Theorem \ref{th:G&L:1} $x^{*}=x_{*}=\tilde{x},$ then $\tilde{x}$ is the unique fixed point of the operator $A$ in $\llbracket w_0,v_0\rrbracket.$
\end{corollary}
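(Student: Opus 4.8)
The plan is to derive this directly from Theorem~\ref{th:G&L:1}, whose conclusion already provides all the structural information we need. Under the hypotheses of Theorem~\ref{th:G&L:1}, the operator $A$ has a \emph{maximal} fixed point $x^{*}$ and a \emph{minimal} fixed point $x_{*}$ in the ordered interval $\llbracket w_0, v_0\rrbracket$. The words ``maximal'' and ``minimal'' here mean that every fixed point $x$ of $A$ lying in $\llbracket w_0, v_0\rrbracket$ satisfies $x_{*} \leq x \leq x^{*}$ with respect to the ordering induced by the cone $K$. This sandwiching property is the entire content we must exploit.

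First I would take an arbitrary fixed point $x \in \llbracket w_0, v_0\rrbracket$ of the operator $A$; such a point exists since $x^{*}$ and $x_{*}$ are themselves fixed points. By the extremality guaranteed in Theorem~\ref{th:G&L:1}, we have the chain
\begin{equation*}
x_{*} \leq x \leq x^{*}.
\end{equation*}
Next I would invoke the hypothesis of the corollary, namely $x^{*} = x_{*} = \tilde{x}$. Substituting this equality into both ends of the chain gives $\tilde{x} \leq x \leq \tilde{x}$, so that $x$ lies in the ordered interval $\llbracket \tilde{x}, \tilde{x}\rrbracket$.

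It then remains only to conclude that $x = \tilde{x}$. This uses the fact that $\leq$ is a genuine partial ordering defined by the cone $K$: antisymmetry of the ordering forces $x \leq \tilde{x}$ and $\tilde{x} \leq x$ together to imply $x = \tilde{x}$. Antisymmetry is automatic here because $K$ is a cone, hence $K \cap (-K) = \{0\}$, so $\tilde{x} - x \in K$ and $x - \tilde{x} \in K$ yield $\tilde{x} - x = 0$. Since $x$ was an arbitrary fixed point in $\llbracket w_0, v_0\rrbracket$ and we have shown $x = \tilde{x}$, the point $\tilde{x}$ is the unique fixed point of $A$ in that interval.

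There is no real obstacle in this argument: the corollary is essentially a formal consequence of the extremality statement combined with the antisymmetry of a cone ordering. The only point requiring any care is to make explicit why ``maximal'' and ``minimal'' fixed points bound \emph{all} fixed points in the interval (not merely that $x_{*} \leq x^{*}$), and to note that no additional appeal to hypotheses (H1) or (H2) is needed beyond what Theorem~\ref{th:G&L:1} already certifies. The entire proof is therefore a two-line deduction.
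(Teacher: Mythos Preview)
Your argument is correct and is precisely the intended one: the paper does not even write out a proof for this corollary, treating it as an immediate consequence of the extremality of $x_{*}$ and $x^{*}$ in Theorem~\ref{th:G&L:1} together with the antisymmetry of the cone ordering. There is nothing to add or correct.
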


\begin{theorem}
\label{th:cone}
The cone $ K=\{ u \in \cB| u(x) \geq 0 \}$ is normal  but not regular in $\cB=C(\bar{D}),$ and regular in $\cB=L_p(D), \; 1 \leq p< \infty,$
where $D$ is a  bounded set and $\bar{D}$ is a closed bounded set, see \cite{Guo&La}.
\end{theorem}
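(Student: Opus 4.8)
Since this result simply records three standard properties of the positive cone, the plan is to recall the relevant definitions and then check each assertion in turn. Recall that $K$ is \emph{normal} if there is a constant $N>0$ such that $0\le u\le v$ implies $\|u\|\le N\|v\|$, and \emph{regular} if every increasing sequence that is bounded above in the order converges in norm. For $\cB=C(\bar D)$ with the sup norm, normality is immediate and I would check it first: if $0\le u\le v$ pointwise, then $\|u\|_\infty=\sup_{\bar D}u\le\sup_{\bar D}v=\|v\|_\infty$, so $N=1$ works. To disprove regularity I would exhibit a concrete monotone, order-bounded sequence of continuous functions whose pointwise limit leaves $C(\bar D)$; taking $\bar D=[0,1]$, the functions $u_n(x)=\min(nx,1)$ are continuous, nonnegative, increasing in $n$, and bounded above by the constant $1$, yet they converge pointwise to a function that jumps at $x=0$. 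As a uniform limit of continuous functions is continuous, $\{u_n\}$ cannot converge in $C(\bar D)$, so $K$ is not regular.

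For $\cB=L_p(D)$, $1\le p<\infty$, I would establish regularity via the dominated convergence theorem. Given an increasing sequence $u_1\le u_2\le\cdots$ with $u_n\le v\in L_p(D)$, set $w_n=u_n-u_1$, so that $0\le w_n\le w:=v-u_1\in L_p(D)$. The $w_n$ increase pointwise a.e.\ and are bounded, hence converge a.e.\ to a measurable $\tilde w\le w$; since they are dominated by $w\in L_p(D)$, dominated convergence yields $\tilde w\in L_p(D)$ and $\|w_n-\tilde w\|_p\to 0$. Therefore $u_n\to u_1+\tilde w$ in the $L_p$ norm, which is exactly regularity.

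The only step with any real content is the failure of regularity in $C(\bar D)$, where one must produce an order-bounded monotone sequence whose limit escapes the space; the normality of the sup-norm cone and the $L_p$ regularity are routine consequences of the order-compatibility of the norm and of the convergence theorems of measure theory, respectively. Indeed, since the statement is attributed to \cite{Guo&La}, it would be equally acceptable simply to cite that reference for all three parts.
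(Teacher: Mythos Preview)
Your argument is correct. The paper does not supply its own proof of this statement at all; it simply records the result with the attribution ``see \cite{Guo&La}'' and moves on. Your write-up therefore goes beyond what the paper does, giving the standard direct verifications (sup-norm compatibility for normality, the $\min(nx,1)$ counterexample for non-regularity, and dominated convergence for $L_p$-regularity). Since the statement is used only for $\bar D=[\Delta_\tau,\Delta_0]$ and $D=[0,\tau]$, your choice of $[0,1]$ for the counterexample is entirely adequate for the paper's purposes; the only caveat one might add in full generality is that non-regularity in $C(\bar D)$ requires $\bar D$ to contain a non-isolated point, but that is automatic here.
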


%

\begin{theorem}
\label{th:Hammerstein}
The Hammerstein operator
\begin{equation*}
(Af)(x)=\int_a^b k(x,y) \psi(y,f(y))dy
\end{equation*}
is continuous and compact in $C([a,b])$ if $k(x,y)$ and  $\psi(x,y)$ are continuous functions on $[a,b]\times[a,b]$.
\end{theorem}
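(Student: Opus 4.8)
The plan is to prove the two assertions—continuity and compactness of the Hammerstein operator on $C([a,b])$—separately, exploiting the uniform continuity that comes from working on a compact domain. Throughout, the key structural fact I would lean on is that $[a,b]\times[a,b]$ is compact, so any function continuous there is \emph{uniformly} continuous and bounded; in particular the kernel $k$ and the nonlinearity $\psi$ are both bounded, say $|k|\leq M_k$ on $[a,b]^2$.

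For continuity, I would fix $f \in C([a,b])$ and a sequence $f_n \to f$ in the sup-norm. First I would note that the ranges of $f$ and all $f_n$ (for $n$ large) lie in a common compact interval $[-R,R]$, so $\psi$ may be treated as uniformly continuous on the compact set $[a,b]\times[-R,R]$. Given $\varepsilon>0$, uniform continuity of $\psi$ supplies a $\delta>0$ such that $|\psi(y,s)-\psi(y,s')|<\varepsilon$ whenever $|s-s'|<\delta$; choosing $n$ large enough that $\|f_n-f\|_\infty<\delta$ forces $|\psi(y,f_n(y))-\psi(y,f(y))|<\varepsilon$ for every $y$. Then
\begin{equation*}
|(Af_n)(x)-(Af)(x)| \leq \int_a^b |k(x,y)|\,|\psi(y,f_n(y))-\psi(y,f(y))|\,dy \leq M_k(b-a)\varepsilon
\end{equation*}
uniformly in $x$, which gives $\|Af_n - Af\|_\infty \to 0$ and hence continuity.

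For compactness, I would take a bounded set $B \subset C([a,b])$, with $\|f\|_\infty \leq R$ for all $f \in B$, and show that $A(B)$ is relatively compact via the Arzelà--Ascoli theorem. Uniform boundedness of $A(B)$ is immediate, since $|(Af)(x)| \leq M_k(b-a)\sup|\psi|$ on the compact set $[a,b]\times[-R,R]$. Equicontinuity is the part requiring care, and this is the main obstacle: I would use the uniform continuity of $k$ in its \emph{first} variable, so that given $\varepsilon>0$ there is $\eta>0$ with $|k(x,y)-k(x',y)|<\varepsilon$ for all $y$ whenever $|x-x'|<\eta$; estimating
\begin{equation*}
|(Af)(x)-(Af)(x')| \leq \int_a^b |k(x,y)-k(x',y)|\,|\psi(y,f(y))|\,dy \leq (b-a)\,\varepsilon\,\sup|\psi|
\end{equation*}
shows the bound is independent of $f \in B$, giving equicontinuity.

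The genuine difficulty is ensuring that the estimates are uniform in the right variables: continuity needs uniformity over $y$ coming from the second slot of $\psi$, while equicontinuity needs uniformity over $f \in B$ coming from the first slot of $k$. Both are supplied cleanly by compactness of the domain and of the range intervals, so no delicate argument beyond Arzelà--Ascoli is needed; the proof is essentially a careful bookkeeping of which uniform continuity is invoked where.
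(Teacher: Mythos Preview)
Your proof is correct and self-contained. The paper takes a different, more structural route: it factors the Hammerstein operator as $A = LN$, where $L$ is the linear integral operator $(Lg)(x)=\int_a^b k(x,y)g(y)\,dy$ and $N$ is the Nemytskii operator $(Nf)(y)=\psi(y,f(y))$; it then cites the standard fact that $L$ is compact on $C([a,b])$ when $k$ is continuous, observes that $N$ is continuous and bounded, and concludes that $A=LN$ is completely continuous as a composition. Your argument instead proves compactness directly via Arzel\`a--Ascoli, which amounts to re-deriving the compactness of $L$ inside the Hammerstein setting. The paper's decomposition is shorter and highlights that all the compactness comes from the linear kernel part, while your approach has the advantage of being entirely self-contained and making explicit exactly which uniform-continuity properties of $k$ and $\psi$ are used where.
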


\begin{proof}
The operator $A$ can be represented as the superposition, $A=LN,$ where $L$ is the linear operator
\begin{equation*}
(Lg)(x)=\int_a^b k(x,y)g(y)dy ,
\end{equation*}
and $N$ is the Nemytskii operator
\begin{equation*}
(Nf)(x)=\psi(x,f(x)).
\end{equation*}

The linear operator $L: C([a,b]) \rightarrow \R$  is continuous and compact  if $k(x,y)$ is continuous \cite{Kolmogorov}.
Obviously, the  Nemytskii operator  $N: C([a,b]) \rightarrow C([a,b])$ is continuous and bounded if $\psi(x,y)$ is continuous.
Thus, the Hammerstein operator $A$ is completely continuous as the superposition of the continuous and bounded operator $N,$ and completely continuous operator $L.$
\end{proof}

\section{Iteration Scheme I: Direct Iteration.} \label{Sec:II}
In this section we consider the direct iteration scheme for construction of bumps. This scheme is based on \cite{KA}. We start out by observing that a bump solution of an $f$-field satisfying \eqref{eq:u-spec}
can be written  as
\begin{equation}
\label{eq:restriction of u}
u(x)=u_\tau(x)+\int_{\Delta_\tau}^{\Delta_0} r(x,y)f(u(y)-h)dy,
\end{equation}
for all $x\in [\Delta_\tau, \Delta_0].$\\

First, we prove that there exists a solution $u^*(x)$ to \eqref{eq:restriction of u} and it can be iteratively constructed. Next, we introduce assumptions under which $u^*(x)$ is appeared to be a restriction of a bump solution to \eqref{model} on $[\Delta_\tau,\Delta_0].$ Finally, in Section we illustrate our results numerically and draw some conclusions based on the numerical observation.\\

Let $\cB$ be a real Banach space with partial ordering $\geq$ defined by the cone $K=\{u\in \cB| u(x)\geq 0\}.$ We have the following theorem.
\begin{theorem}\label{th:existence}
Let $\cB$ be either $L_2([\Delta_\tau, \Delta_0])$  or $C([\Delta_\tau, \Delta_0]).$
Let $\omega$  satisfy Assumption \ref{As:1} and \ref{As:2}, the operator $T_f: \llbracket u_\tau, u_0\rrbracket\subset \cB \rightarrow \cB$ be defined as
\begin{equation}
\label{operator:T_f}
(T_f u)(x)=u_\tau(x)+\int_{\Delta_\tau}^{\Delta_0}r(x,y)f(u(y)-h)dy.
\end{equation}
Then $T_f$ has a fixed point in $\llbracket u_\tau, u_0 \rrbracket.$ Moreover, the sequences $\{T_f^n u_\tau\}$ and $\{T_f^n u_0\}$ converge to the minimal and maximal fixed point of the operator $T_f,$ respectively.
\end{theorem}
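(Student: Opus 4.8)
The plan is to verify the hypotheses of Theorem \ref{th:G&L:1}, applied with $A=T_f$, $w_0=u_\tau$, and $v_0=u_0$ on the ordered interval $\llbracket u_\tau, u_0\rrbracket$. Since $f$ is non-decreasing and $r(x,y)\geq 0$ on $[\Delta_\tau,\Delta_0]^2$ by Assumption \ref{As:2}, the operator $T_f$ is increasing: if $u\leq v$ pointwise then $f(u(y)-h)\leq f(v(y)-h)$, and integrating against the non-negative kernel $r$ preserves the inequality, so $T_f u\leq T_f v$. This is the monotonicity needed for the abstract theorem.

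Next I would establish the two boundary inequalities $u_\tau\leq T_f u_\tau$ and $T_f u_0\leq u_0$. For the lower end, note that $f\geq 0$ and $r\geq 0$ give $(T_f u_\tau)(x)=u_\tau(x)+\int_{\Delta_\tau}^{\Delta_0}r(x,y)f(u_\tau(y)-h)\,dy\geq u_\tau(x)$, so $u_\tau\leq T_f u_\tau$ immediately. For the upper end I would use the representation $u_0(x)=u_\tau(x)+\int_{\Delta_\tau}^{\Delta_0}r(x,y)\,dy$ (from Lemma \ref{lemma:u_tau<u_0} and the definition of $\Phi$) together with the bound $f\leq 1$. Then $(T_f u_0)(x)=u_\tau(x)+\int_{\Delta_\tau}^{\Delta_0}r(x,y)f(u_0(y)-h)\,dy\leq u_\tau(x)+\int_{\Delta_\tau}^{\Delta_0}r(x,y)\,dy=u_0(x)$, again using $r\geq 0$ so that replacing $f$ by its upper bound $1$ increases the integral. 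Thus $T_f$ maps $\llbracket u_\tau,u_0\rrbracket$ into itself and satisfies the hypotheses of the theorem at both endpoints.

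The remaining task is to check the compactness/continuity condition, and here I would split into the two cases for $\cB$ to match the two alternatives (H1) and (H2). For $\cB=C([\Delta_\tau,\Delta_0])$, the cone is normal by Theorem \ref{th:cone}, and $T_f$ is (up to the additive continuous function $u_\tau$) a Hammerstein operator with kernel $k(x,y)=r(x,y)$ and nonlinearity $\psi(y,s)=f(s-h)$; since $r$ is continuous and $f$ is continuous, Theorem \ref{th:Hammerstein} gives that this operator is completely continuous, hence condensing, so (H1) applies. For $\cB=L_2([\Delta_\tau,\Delta_0])$, the cone is regular by Theorem \ref{th:cone}, so I would instead verify the semicontinuity in (H2): if $u_n\to u$ strongly in $L_2$, then along a subsequence $u_n\to u$ a.e., and by continuity and boundedness of $f$ together with dominated convergence one obtains $T_f u_n\to T_f u$ (in fact strongly, hence weakly). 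Applying Theorem \ref{th:G&L:1} then yields a maximal and a minimal fixed point obtained as the monotone limits of $\{T_f^n u_0\}$ and $\{T_f^n u_\tau\}$ respectively, which is exactly the assertion.

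The main obstacle I anticipate is the $L_2$ case: establishing the semicontinuity (H2) cleanly requires care with the passage from strong $L_2$ convergence to a.e. convergence (only along a subsequence) and a dominated-convergence argument, and one must confirm that the Nemytskii map $u\mapsto f(u-h)$ behaves well in $L_2$ even though $f$ is merely continuous and bounded rather than Lipschitz. The $C$-space case is comparatively routine once Theorem \ref{th:Hammerstein} is invoked. I would also take care that the additive inhomogeneous term $u_\tau$, which is fixed, does not affect compactness or semicontinuity, since it contributes a constant shift that is itself an element of $\cB$ in each case.
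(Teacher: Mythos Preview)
Your proposal is correct and follows essentially the same route as the paper: verify monotonicity of $T_f$ from Assumption~\ref{As:2} and the monotonicity of $f$, check the endpoint inequalities $u_\tau\le T_f u_\tau$ and $T_f u_0\le u_0$, and then invoke Theorem~\ref{th:G&L:1} via (H1) in $C([\Delta_\tau,\Delta_0])$ (compactness from Theorem~\ref{th:Hammerstein}) and via (H2) in $L_2([\Delta_\tau,\Delta_0])$ (regular cone plus continuity). The only stylistic difference is that the paper obtains the endpoint inequalities by comparing $T_f$ with the auxiliary operators $T_{f_0}$ and $T_{f_\tau}$ (using $f_\tau\le f\le f_0$ and $T_{f_0}u_0=u_0$, $T_{f_\tau}u_\tau=u_\tau$), whereas you argue more directly from $0\le f\le 1$ and the identity $u_0=u_\tau+\int_{\Delta_\tau}^{\Delta_0} r(x,y)\,dy$; both are equivalent here.
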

\begin{proof}
We base our proof on Theorem \ref{th:G&L:1}.
The cone $K$ is normal provided  $\cB=C([\Delta_\tau, \Delta_0])$ and is regular provided $\cB=L_2([\Delta_\tau, \Delta_0]),$  see Theorem \ref{th:cone}. By Assumptions \ref{As:1} and \ref{As:2}  there exist $u_\tau$ and $u_0$ such that $0\leq u_\tau(x)\leq u_0(x)$ for all $x\in [\Delta_\tau, \Delta_0].$
Thus, $\llbracket u_\tau, u_0\rrbracket$ is the ordered interval defined on $\cB.$\\

 We describe the properties of $T_f$ which hold true in both spaces: $\cB=C([\Delta_\tau, \Delta_0])$ and $\cB=L_2([\Delta_\tau, \Delta_0]).$
First of all, $T_f$ is  positive and  monotone due to Assumption \ref{As:2} and monotonicity of $f,$ i.e.,
\begin{equation*}
u_1(x)\leq u_2(x)\Rightarrow(T_fu_1)(x) \leq (T_fu_2)(x).
\end{equation*}
Moreover, $T_f$ is continuous due to continuity of  $f$ and boundedness of $r(x,y).$
Defining a non-linear operator $T_g$ associated with the non-negative function $g(x)$ by
\begin{equation*}
(T_gu)(x)=u_\tau(x)+\int_{\Delta_\tau}^{\Delta_0} r(x,y)g(u(y)-h)dy
\end{equation*}
we get
\begin{equation*}
\begin{array}{ll}
T_{f_0}u_0=u_0,& T_{f_\tau}u_\tau=u_\tau.
\end{array}
\end{equation*}
Thus, from Assumption \ref{As:2}  we can easily deduce that
\begin{equation*}
g(x)\leq m(x) \, \Rightarrow \, (T_gu)(x)\leq (T_mu)(x),
\end{equation*}
and, therefore,
\begin{equation*}
(T_{f_\tau}u)(x)\leq (T_fu)(x) \leq (T_{f_0}u)(x).
\end{equation*}
We obtain
\begin{equation*}
\begin{array}{ll}
T_fu_\tau \geq T_{f_\tau}u_\tau =u_\tau,& T_fu_0 \leq T_{f_0}u_0=u_0.
\end{array}
\end{equation*}

From Theorem \ref{th:G&L:1}  we conclude that $T_f:\llbracket u_\tau, u_0 \rrbracket \subset L_2([\Delta_\tau, \Delta_0]) \rightarrow L_2([\Delta_\tau, \Delta_0])$ has a fixed point in  $\llbracket u_\tau, u_0 \rrbracket$ which can be found by iterations.
However, for the case $\cB=C([\Delta_\tau, \Delta_0])$ it remains to show
that $T_f$ is condensing.
Applying  Theorem \ref{th:Hammerstein} to the Hammerstein operator on the right hand side of \eqref{operator:T_f}, i.e., to the operator $u \mapsto \int_{\Delta_\tau}^{\Delta_0}r(x,y)f(u(y)-h)dy,$
 we find that $T_f: \llbracket u_\tau, u_0 \rrbracket  \rightarrow C([\Delta_\tau, \Delta_0])$  is compact and, thus, condensing. This observation completes the proof.
\end{proof}

Next we show that the fixed point $u^*$ of the operator $T_f$ referred to in the theorem above can be extended to the solution $u$ of \eqref{steady state} over  $\R$ in such a way that $u(x)\geq h+\tau$ for $x\in [0,\Delta_\tau]$ and $u(x)\leq h$ for $x\in [\Delta_0, \infty).$ To do so we introduce additional assumptions on the connectivity function $\omega$.

\begin{assumption}\label{As:3}
$u_0$ is a decreasing function on the interval $[\Delta_\tau, \Delta_0]$ which is equivalent to
$$\dfrac{\partial \Phi}{\partial x}(x,\Delta_0)<0 ,\: \forall x\in [\Delta_\tau, \Delta_0],$$
and
$u_\tau$ is a  decreasing function on $[\Delta_\tau, \Delta_0]$ which is equivalent to
$$\dfrac{\partial \Phi}{\partial x}(x,\Delta_\tau)<0 ,\: \forall x\in [\Delta_\tau, \Delta_0].$$
\end{assumption}
From this assumption the transversality of the intersections $u_0(x)$ with $h,$ and $u_\tau(x)$ with $h+\tau$ follows. Thus, the assumption always can be satisfied if, for example, we choose a small $\tau$ provided $|\Delta_0-\Delta_\tau|$ is sufficiently small.
%

\begin{assumption}\label{As:4}
\begin{equation*}
 \int_{\Delta_\tau}^{\Delta_0} \mid\dfrac{\partial r}{\partial x}(x,y)\mid f(u_0(y)-h)dy < -\dfrac{\partial \Phi}{\partial x}(x,\Delta_\tau) ,\: \forall x\in [\Delta_\tau, \Delta_0].
\end{equation*}
\end{assumption}
Assumption \ref{As:4} is technical and is used to prove that $u^*(x)$ is a decreasing function on $[\Delta_\tau,\Delta_0].$
Noticing  that
\begin{equation*}
\dfrac{\partial r}{\partial x}=\dfrac{\partial^2 \Phi}{\partial x \partial y },
\end{equation*}
 non-negativity of  ${\partial r}/{\partial x}$ for  $x,y \in [\Delta_\tau, \Delta_0]$ and Assumption 3 imply that Assumption 4 is satisfied.  Indeed, the following chain of inequalities is valid for all $x\in [\Delta_\tau,\Delta_0]$
 \begin{equation*}
 \begin{split}
 \int_{\Delta_\tau}^{\Delta_0} \mid\dfrac{\partial r}{\partial x}(x,y)\mid f(u_0(y)-h)dy + \dfrac{\partial \Phi}{ \partial x} (x,\Delta_\tau)<\\
  <\int_{\Delta_\tau}^{\Delta_0} \dfrac{\partial^2 \Phi}{\partial y \partial x}(x,y)dy+ \dfrac{\partial \Phi}{ \partial x} (x,\Delta_\tau)=  \dfrac{\partial \Phi}{ \partial x} (x,\Delta_0)<0.
 \end{split}
\end{equation*}
However, the non-negativity of ${\partial r}/{\partial x}$ is rather rigid condition.

\begin{lemma} \label{lemma:u'<0}
Let Assumption 3 and 4 be fulfilled. Then, the fixed point $u^*(x)$ of the operator $T_f$ is differentiable and  decreasing on the interval $[\Delta_\tau, \Delta_0].$
\end{lemma}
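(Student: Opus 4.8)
The plan is to differentiate the fixed-point identity
\[
u^*(x)=u_\tau(x)+\int_{\Delta_\tau}^{\Delta_0}r(x,y)f(u^*(y)-h)\,dy
\]
term by term and then to estimate the resulting derivative by means of Assumption \ref{As:4}. Write $g(y):=f(u^*(y)-h)$; since the right-hand side of the fixed-point equation is continuous in $x$, the fixed point has a continuous representative, so $g$ is continuous with $0\le g\le 1$. The first summand $u_\tau(x)=\Phi(x,\Delta_\tau)$ is of class $C^1$ on $[\Delta_\tau,\Delta_0]$, its derivative $\partial\Phi/\partial x(x,\Delta_\tau)=\omega(\Delta_\tau+x)-\omega(\Delta_\tau-x)$ being continuous by condition (iii). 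Everything therefore reduces to the integral term $I(x):=\int_{\Delta_\tau}^{\Delta_0}r(x,y)g(y)\,dy$.

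First I would show that $I\in C^1$ with
\[
I'(x)=J(x):=\int_{\Delta_\tau}^{\Delta_0}\frac{\partial r}{\partial x}(x,y)\,g(y)\,dy,
\qquad \frac{\partial r}{\partial x}(x,y)=\omega'(y+x)-\omega'(y-x),
\]
where $\omega'\in L^\infty(\R)$ by condition (iv) and need only exist almost everywhere. Two facts are needed. (a) For fixed $y$ the map $x\mapsto r(x,y)$ is Lipschitz (because $\omega$ is), hence absolutely continuous, so $r(x,y)-r(x_0,y)=\int_{x_0}^x \partial r/\partial t(t,y)\,dt$. (b) The function $J$ is continuous: extending $g$ by zero and substituting, $\int_{\Delta_\tau}^{\Delta_0}\omega'(y\pm x)g(y)\,dy$ is the pairing of the fixed $L^\infty$ function $\omega'$ with the translate $g(\cdot\mp x)$, which varies continuously with $x$ since translation is continuous in $L^1$ and $g\in L^1\cap L^\infty$ has compact support. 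Given (a)–(b), Fubini's theorem (the integrand is bounded on the bounded rectangle $[x_0,x]\times[\Delta_\tau,\Delta_0]$) yields
\[
\int_{x_0}^x J(t)\,dt=\int_{\Delta_\tau}^{\Delta_0}g(y)\bigl(r(x,y)-r(x_0,y)\bigr)\,dy=I(x)-I(x_0),
\]
and, $J$ being continuous, the fundamental theorem of calculus gives $I'=J$. Hence $u^*=u_\tau+I$ is continuously differentiable on $[\Delta_\tau,\Delta_0]$ with
\[
\frac{du^*}{dx}(x)=\frac{\partial\Phi}{\partial x}(x,\Delta_\tau)+\int_{\Delta_\tau}^{\Delta_0}\frac{\partial r}{\partial x}(x,y)f(u^*(y)-h)\,dy.
\]

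For the monotonicity I would bound this derivative from above. Since $f\ge 0$, $f$ is non-decreasing, and $u^*\le u_0$ on $[\Delta_\tau,\Delta_0]$ (as $u^*\in\llbracket u_\tau,u_0\rrbracket$), one has pointwise $\tfrac{\partial r}{\partial x}f(u^*-h)\le \bigl|\tfrac{\partial r}{\partial x}\bigr|f(u^*-h)\le \bigl|\tfrac{\partial r}{\partial x}\bigr|f(u_0-h)$. Integrating and invoking Assumption \ref{As:4} gives, for every $x\in[\Delta_\tau,\Delta_0]$,
\[
\int_{\Delta_\tau}^{\Delta_0}\frac{\partial r}{\partial x}(x,y)f(u^*(y)-h)\,dy
\le \int_{\Delta_\tau}^{\Delta_0}\Bigl|\frac{\partial r}{\partial x}(x,y)\Bigr|f(u_0(y)-h)\,dy
< -\frac{\partial\Phi}{\partial x}(x,\Delta_\tau),
\]
whence $du^*/dx(x)<0$ throughout the interval and $u^*$ is strictly decreasing.

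The main obstacle is the pair (a)–(b): justifying differentiation under the integral sign when $\omega$ lies only in $W^{1,\infty}$, so that $\omega'$ exists merely almost everywhere and is only bounded. A direct dominated-convergence argument on difference quotients would deliver differentiability only almost everywhere; the Fubini-plus-fundamental-theorem route, together with the continuity of $J$ obtained from $L^1$-continuity of translations, is precisely what upgrades this to genuine $C^1$ regularity. Once the derivative formula is established, the decreasing property follows immediately from Assumption \ref{As:4}.
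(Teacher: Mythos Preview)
Your proof is correct and follows the same route as the paper: differentiate the fixed-point identity to obtain $(u^*)'=u_\tau'+\int_{\Delta_\tau}^{\Delta_0}\tfrac{\partial r}{\partial x}(x,y)f(u^*(y)-h)\,dy$, then bound the integral by $\int |\tfrac{\partial r}{\partial x}|f(u_0-h)\,dy$ and invoke Assumption~\ref{As:4}. Your treatment of differentiation under the integral sign (the Fubini-plus-continuity-of-$J$ argument) is in fact more careful than the paper's, which simply writes down the derivative formula without further justification.
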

\begin{proof}
We get
\begin{equation*}
(u^*(x))'=u'_\tau(x)+I, \quad I=\int_{\Delta_\tau}^{\Delta_0} \dfrac{\partial r}{\partial x}(x,y) f(u^*(y)-h)dy.
\end{equation*}
In order to prove that $(u^*(x))'<0$ we need to show that $I< -u'_{\tau}(x) $ where
\begin{equation*}
u'_{\tau}(x)=\dfrac{\partial \Phi}{\partial x}(x,\Delta_\tau) <0
\end{equation*}
by Assumption \ref{As:3}.

We get the following chain of inequalities for $|I|$
\begin{equation*}
\begin{split}
|I| &\leq \int_{\Delta_\tau}^{\Delta_0}   \mid\dfrac{\partial r}{\partial x}(x,y)\mid f(u^*(y)-h)dy \leq \\
& \leq \int_{\Delta_\tau}^{\Delta_0}   \mid\dfrac{\partial r}{\partial x}(x,y)\mid f(u_0(y)-h)dy .
\end{split}
\end{equation*}
Thus, by Assumption \ref{As:4} we have $|I|< -u'_\tau(x)$ and therefore $I< -u'_\tau(x).$
\end{proof}


Finally, we introduce the assumption which by Definition \ref{def:f-bump} allows us to view the extended solution $u$ of $u^*$ as a bump:

\begin{assumption} \label{As:5}
The function $\Phi$ is such that
\begin{itemize}
\item[(i)] $\Phi(x,y) \leq h, \: \forall x> \Delta_0, \; y \in [\Delta_\tau, \Delta_0],$\\
\item[(ii)] $\Phi(x,y) \geq h+\tau, \: \forall x \in [ 0,\Delta_\tau],\; y \in [\Delta_\tau, \Delta_0].$
\end{itemize}
\end{assumption}

\begin{theorem}
\label{th:extension}
Let $u^* \in C^1([\Delta_\tau, \Delta_0])$ define the fixed point of the operator $T_f$ referred to in Theorem \ref{th:existence}. Under Assumptions \ref{As:3} - \ref{As:5} the function $u^*$ can be extended to a bump solution $u(x)$ of \eqref{steady state:2} defined on $\R$ in such a way that $u(x)>h+\tau$ for all $x \in [0, \Delta_\tau) $ and $u(x)<h$ for all $x \in (\Delta_0, \infty).$
\end{theorem}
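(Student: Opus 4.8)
The plan is to produce the extension by the only formula consistent with \eqref{steady state:2}, namely to set
\begin{equation*}
u(x):=u_\tau(x)+\int_{\Delta_\tau}^{\Delta_0}r(x,y)\,f(u^*(y)-h)\,dy,\qquad x\in\R .
\end{equation*}
For $x\in[\Delta_\tau,\Delta_0]$ this coincides with $(T_fu^*)(x)=u^*(x)$, so $u$ genuinely extends $u^*$; and since $r(\cdot,y)$, hence $\Phi(\cdot,y)$, are even in $x$ (because $\omega$ is symmetric), $u$ is automatically symmetric, so it suffices to treat $x\ge 0$. First I would record the two endpoint identities $u^*(\Delta_\tau)\ge h+\tau$ and $u^*(\Delta_0)\le h$: these follow from $u_\tau\le u^*\le u_0$ (as $u^*\in\llbracket u_\tau,u_0\rrbracket$) together with $u_\tau(\Delta_\tau)=\Phi(\Delta_\tau,\Delta_\tau)=h+\tau$ and $u_0(\Delta_0)=\Phi(\Delta_0,\Delta_0)=h$, which are the width conditions of the $f_\tau$- and $f_0$-bumps from Theorem \ref{th:a-solutions}. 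Consequently $g(y):=f(u^*(y)-h)$ satisfies $g(\Delta_\tau)=1$ and $g(\Delta_0)=0$.

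The heart of the argument is an integration by parts. Writing $r=\partial_y\Phi$ and using that $u^*\in C^1$ is decreasing (Lemma \ref{lemma:u'<0}) while $f$ is non-decreasing, the function $g$ is non-increasing and of bounded variation, so
\begin{equation*}
\int_{\Delta_\tau}^{\Delta_0}r(x,y)\,g(y)\,dy=\big[\Phi(x,y)g(y)\big]_{\Delta_\tau}^{\Delta_0}-\int_{\Delta_\tau}^{\Delta_0}\Phi(x,y)\,dg(y)=-u_\tau(x)-\int_{\Delta_\tau}^{\Delta_0}\Phi(x,y)\,dg(y).
\end{equation*}
Substituting into the definition of $u$, the $u_\tau(x)$ terms cancel and I obtain the key identity
\begin{equation*}
u(x)=\int_{\Delta_\tau}^{\Delta_0}\Phi(x,y)\,d\mu(y),\qquad d\mu:=-dg,
\end{equation*}
where $\mu$ is a \emph{nonnegative} measure on $[\Delta_\tau,\Delta_0]$ of total mass $g(\Delta_\tau)-g(\Delta_0)=1$. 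Thus $u(x)$ is a probability-weighted average of the values $\Phi(x,y)$, $y\in[\Delta_\tau,\Delta_0]$. This is the decisive reformulation: the naive comparison $u\le u_0$, $u\ge u_\tau$ is unavailable for $x\notin[\Delta_\tau,\Delta_0]$ because the sign of $r(x,y)$ is then unknown, whereas the averaged form only needs pointwise control of $\Phi$ itself.

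Now the conclusion is immediate from Assumption \ref{As:5}. For $x>\Delta_0$ part (i) gives $\Phi(x,y)\le h$ for every $y\in[\Delta_\tau,\Delta_0]$, so $u(x)\le\int h\,d\mu=h$; for $x\in[0,\Delta_\tau)$ part (ii) gives $\Phi(x,y)\ge h+\tau$, so $u(x)\ge h+\tau$. Strictness on the half-open ranges follows by combining these bounds with the transversality supplied by Assumption \ref{As:3} (equivalently the strict form of Assumption \ref{As:5} away from the matching points), giving $u(x)<h$ on $(\Delta_0,\infty)$ and $u(x)>h+\tau$ on $[0,\Delta_\tau)$. Finally I would verify that $u$ solves \eqref{steady state:2}: by the two inequalities just proved, $f(u(y)-h)=1$ on $[0,\Delta_\tau)$ and $f(u(y)-h)=0$ on $(\Delta_0,\infty)$, while $f(u(y)-h)=f(u^*(y)-h)$ on $[\Delta_\tau,\Delta_0]$; inserting this into $\int_0^\infty r(x,y)f(u(y)-h)\,dy$ reproduces exactly the defining formula for $u(x)$. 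Together with the monotonicity of $u^*$ on $[\Delta_\tau,\Delta_0]$, these sign properties show $R^*[u]$ and $R[u]$ are nested symmetric intervals, so $u$ is a bump in the sense of Definition \ref{def:f-bump}. The one genuinely delicate point is the averaging identity and the mild regularity of $g$ needed to justify the integration by parts; everything after it is bookkeeping.
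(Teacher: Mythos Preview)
Your proposal is correct and follows essentially the same route as the paper. Both arguments extend $u^*$ by the natural formula and then integrate by parts (using $r=\partial_y\Phi$ and the monotonicity of $y\mapsto f(u^*(y)-h)$ from Lemma~\ref{lemma:u'<0}) to rewrite $u(x)$ as a probability average of $\Phi(x,\cdot)$, after which Assumption~\ref{As:5} finishes the job. The only cosmetic differences are that the paper first locates the actual crossing points $\delta_\tau,\delta_0\in(\Delta_\tau,\Delta_0)$ and works on $[\delta_\tau,\delta_0]$, then performs the change of variable $\xi=1-F(z)$ to write $u(x)=\int_0^1\Phi(x,z(\xi))\,d\xi$, whereas you stay on $[\Delta_\tau,\Delta_0]$ and phrase the same identity via the Stieltjes measure $d\mu=-dg$; since $g$ is constant outside $[\delta_\tau,\delta_0]$ these are literally the same integral. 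Your remark about why the naive comparison $u_\tau\le u\le u_0$ is unavailable off $[\Delta_\tau,\Delta_0]$ is a nice bit of motivation the paper leaves implicit. The strictness claim is equally soft in both versions: Assumption~\ref{As:5} is stated with non-strict inequalities, so the passage to $u(x)<h$ and $u(x)>h+\tau$ is not fully justified in either proof without a slight sharpening of that assumption.
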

\begin{proof}
From Theorem \ref{th:existence} and Lemma \ref{lemma:u'<0} it follows that there exist unique $\delta_\tau,$ $\delta_0:$ $\Delta_\tau<\delta_\tau<\delta_0<\Delta_0$ such that
\begin{equation*}
u^*(\delta_\tau)=h+\tau, \quad u^*(\delta_0)=h.
\end{equation*}
Let us introduce the function $F$ defined by
\begin{equation*}
F(y)=\left\{\begin{array}{ll}
            1,&  0\leq y < \delta_\tau \\
            f(u^*(y)-h), &\delta_\tau \leq y \leq \delta_0\\
            0, & y>\delta_0.
            \end{array}\right.
\end{equation*}
Then, according to Lemma \ref{lemma:u'<0} $F(y)$ is monotonically decreasing function  on $[\delta_\tau, \delta_0]$ with $F(\delta_0)=0$ and $F(\delta_\tau)=1.$
From \eqref{steady state} we get
\begin{equation*}
u(x)=\int_{0}^{\delta_0} r(x,y)F(y)dy=-\int_{0}^{\delta_0} r(x,y) \int_y^{\delta_0} F'(z)dz.
\end{equation*}
By changing the order of integration, we have
\begin{equation*}
u(x)=-\int_{\delta_\tau}^{\delta_0} \int_0^z r(x,y)dy F'(z)dz= \int_{\delta_\tau}^{\delta_0} \int_0^z r(x,y)dy d(1-F(z)),
\end{equation*}
or
\begin{equation*}
u(x)=\int_{0}^{1} \int_0^{z(\xi)} r(x,y)dy d\xi=\int_{0}^{1} \Phi(x,z(\xi)) d\xi, \; \mbox{ where } \xi=1-F(z).
\end{equation*}
It remains to show that $\Phi(x,z)< h$ for $ x> \Delta_0,$ $z \in [\delta_\tau, \delta_0],$ and $\Phi(x,z)> h+\tau$ for $ 0\leq x< \Delta_\tau,$ $z \in [\delta_\tau, \delta_0].$ Assumption \ref{As:5} guarantees that these inequalities are fulfilled even on a larger interval. Moreover, $u(-x)=u(x)$ due to symmetry of $\omega.$ Thus, the proof is completed.
\end{proof}
The proof of Theorem \ref{th:existence} is a modification of the theorem used in \cite{KA}. The modification is caused by the fact that our assumptions on the connectivity functions $\omega$ are different from ones used in \cite{KA}.

\subsection{Numerical example}\label{Sec:Numerics:1}
In this section we exploit examples of $\omega$ be given by \eqref{eq:Mexican-hat}.
Thus, the equation  $\omega(x)=0$ has one positive solution $x_{max}=\sqrt{1/{(k-m)}\ln(K/M) }.$ Furthermore, $\omega(x)$ is positive for all $|x|<x_{max}$ and is negative for all $|x|>x_{max}.$ This defines the behavior of the antiderivative to $\omega,$ i.e.,
 \begin{equation}
 \label{eq:W}
 W(x)=\int\limits_0^x \omega(y)dy.
 \end{equation}
Then, for any $h,\tau$ satisfying the inequality
\begin{equation*}
\lim \limits_{x\rightarrow +\infty}W(x)<h<h+\tau<W(x_{max})
\end{equation*}
 there exist the widths $\Delta_\tau, \Delta_0$ referred to in Assumption 1 such that $ [\Delta_\tau, \Delta_0] \subset (1/2\, x_{max},\infty.)$ Moreover, $u_0(x)=\Phi(x,\Delta_0)$ and  $u_{\tau}=\Phi(x,\Delta_\tau)$ are bumps  of the $f_0$- and the $f_\tau$-field model.\\

 We let $K=1.5,$ $k=2,$ $M=m=1$(see Fig.\ref{Fig1}), and chose $h=0.1$ and $\tau=0.05.$ Given these parameters and \eqref{eq:antideriv} we found $\Delta_0=0.6633$ and two possible values of $\Delta_\tau:$ $0.1769$ and $0.5012$ (see Fig.\ref{Fig3}(a)).  We fix $\Delta_\tau=0.1769,$ and denote $\Delta_\tau^{st}=0.5012.$  We notice here that $\omega(2\Delta_0)$ and $\omega(2\Delta_\tau^{st})$ are negative while $\omega(2\Delta_\tau)$ is positive, see Fig. \ref{Fig3}(b). By Theorem \ref{th:stability}, we conclude that $u_0(x)=\Phi(x,\Delta_0)$ and $u_\tau^{st}=\Phi(x,\Delta_\tau^{st})$ are linearly stable solution to $f_0$- and $f_\tau$-field models, while $u_\tau(x)=\Phi(x,\Delta_\tau)$ is a linearly unstable solution to $f_\tau$-field model. This explains upper index '$st$' in our notation.

\begin{figure}[h]
\centering
\subfigure[]{
\scalebox{0.5}{\includegraphics{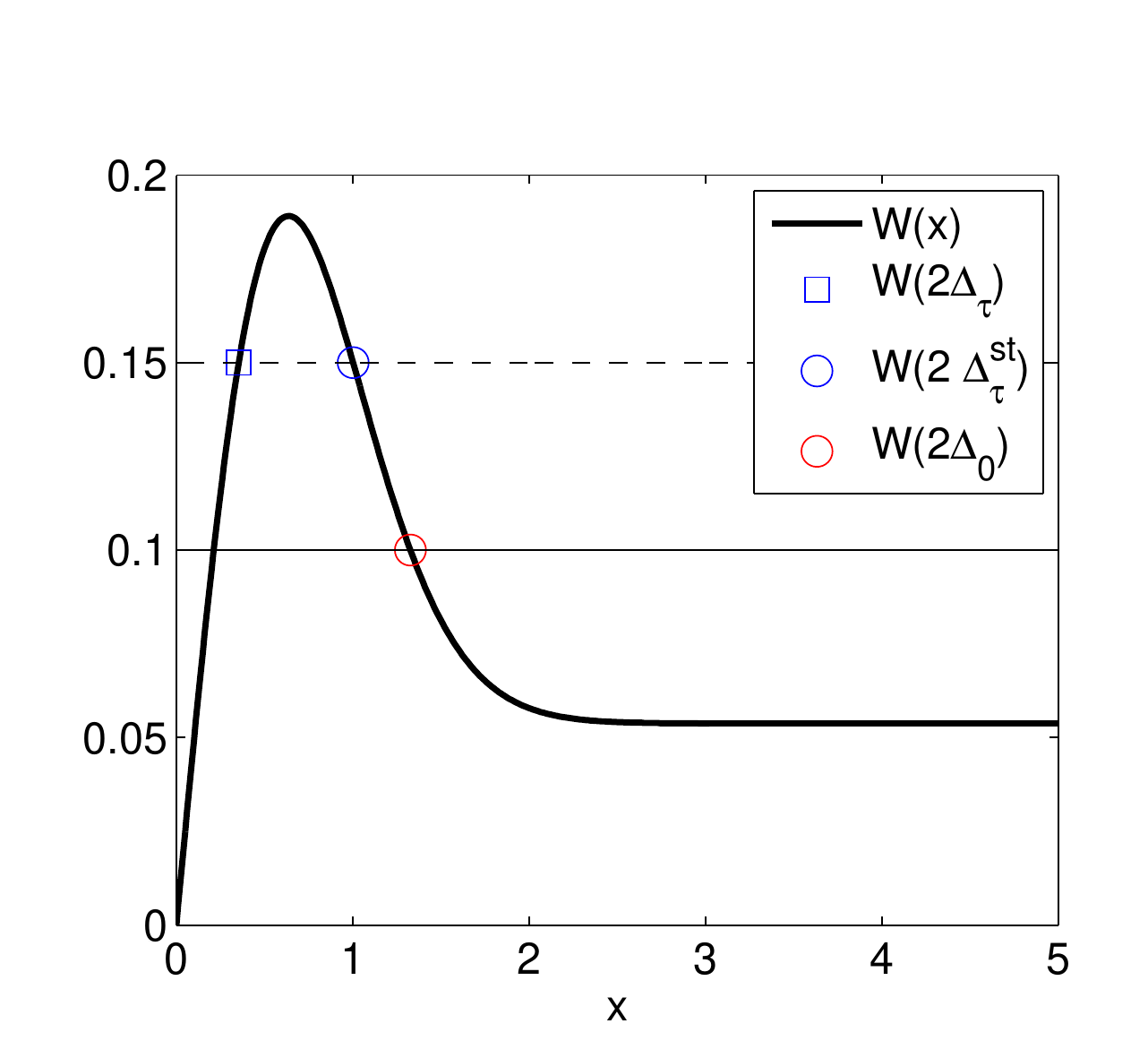}}
}
\subfigure[]{
\scalebox{0.5}{\includegraphics{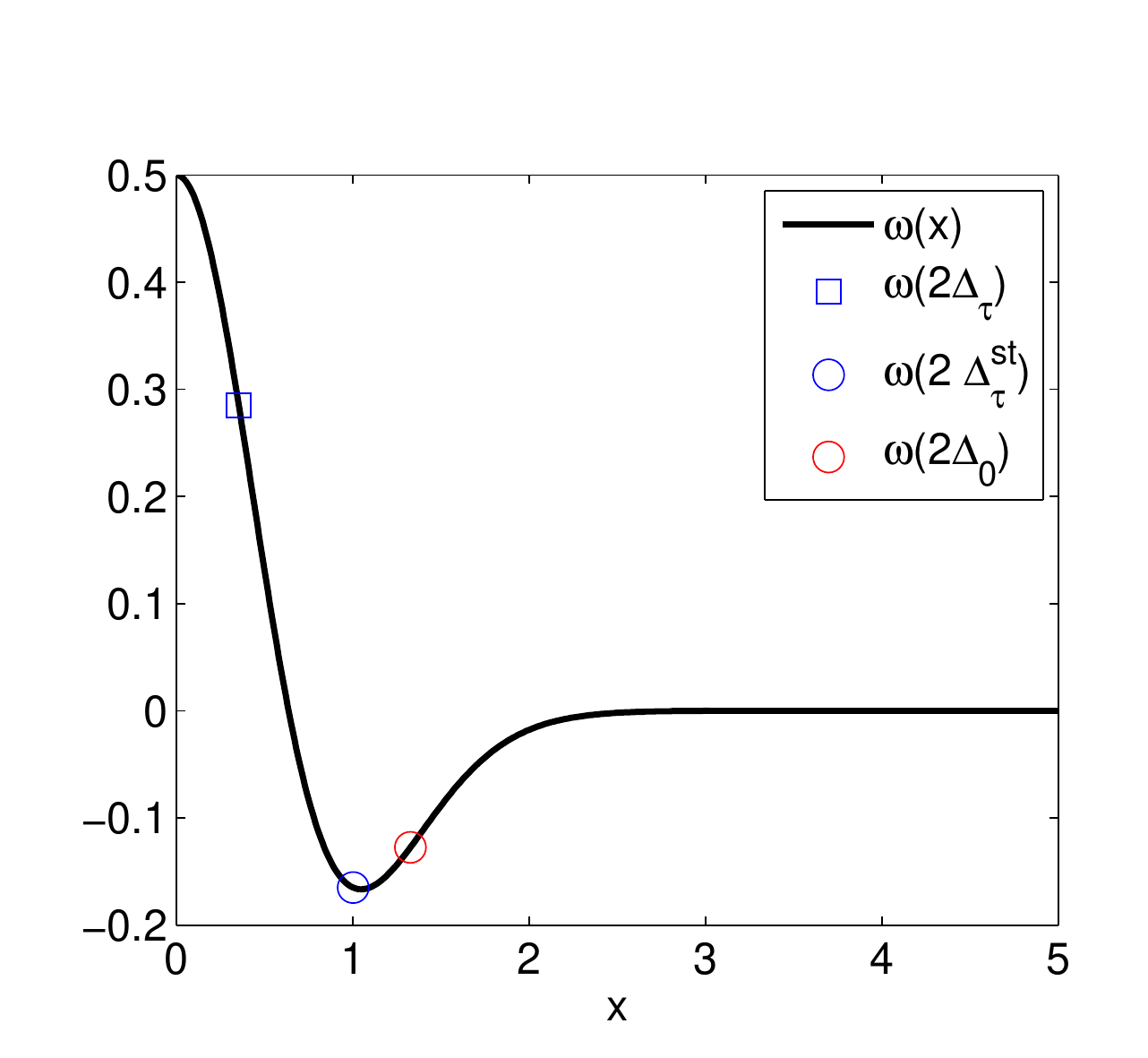}}
}
\caption{(a) The antiderivative of $\omega$ given by \eqref{eq:W}.  The blue square indicates the point $(2\Delta_\tau,h+\tau),$ the blue circle corresponds to $(2\Delta_\tau^{st},h+\tau),$ and the red circle corresponds to $(\Delta_0^{st},h).$ The horizontal solid and dashed line are $h$ and $h+\tau,$ respectively. (b) The connectivity function $\omega$ and the points $(a, \omega(a))$ with $a=\Delta_\tau,$ $\Delta_\tau^{st}$ and $\Delta_0,$ denoted by the blue square, blue circle, and  red circle, respectively.} \label{Fig3}
\end{figure}

Assumptions 2 - 5 has been verified numerically. Thus, we apply Theorem \ref{th:existence}. We chose $f$ to be given as in
in \eqref{eq:logoid} with $p=3.$ In Fig.\ref{Fig4}(a) we have plotted the fixed point $u^*(x)$ of the operator $T_f$ obtained by iterations from the restriction of $u_\tau(x)$ and $u_0(x)$ on $[\Delta_\tau,\Delta_0]$ as the initial values.  From Corollary \ref{corollary:1} we conclude that $u^*$ is, then, a unique solution of the fixed point problem for \eqref{operator:T_f}. We have plotted $u_0,$ $u_\tau,$ and $u^{st}_\tau$ on the same figure to illustrate the inequality
\begin{equation}
\label{eq:u<u<u}
u^{st}_\tau\leq u^*\leq u_0.
\end{equation}
Thus, $u^*$ is located in between of two stable bumps of the $f_0-$ and the $f_\tau$ field model on $[\Delta_\tau,\Delta_0]$. Based on \cite{A} we claim  that $u^*$ is a restriction of the stable bump solution to $f-$field equation.

Fig.\ref{Fig4}(b) illustrates the dynamics of the iteration process. There we have plotted the numerical errors calculated as
\begin{equation}
\label{eq:errors}
\varepsilon(n)=\max\limits_{x}|(T_f^{n}u_0)(x)-(T_f^{n}u_\tau)(x)|, \quad n=1,2,...,N,
\end{equation}
where $T_f^{0}$ is equivalent to the identity operator, $n$ corresponds to the iteration number, and  $N$ denotes the total number of iterations. We observe that in our calculations $\varepsilon(n)<10^{-5}$ for $n\geq 16.$
\begin{figure}[h]
\centering
\subfigure[]{
\scalebox{0.5}{\includegraphics{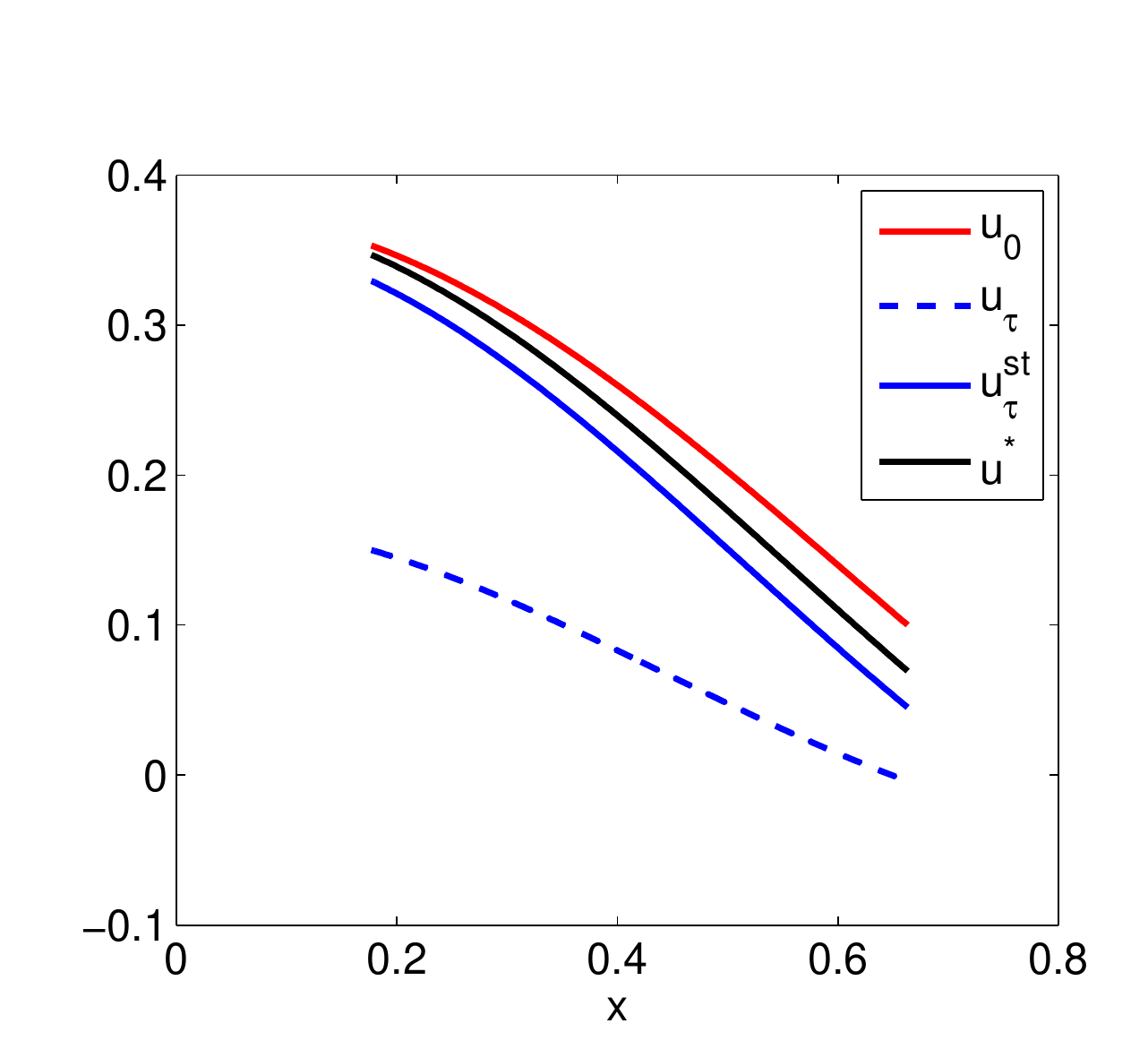}}
}
\subfigure[]{
\scalebox{0.5}{\includegraphics{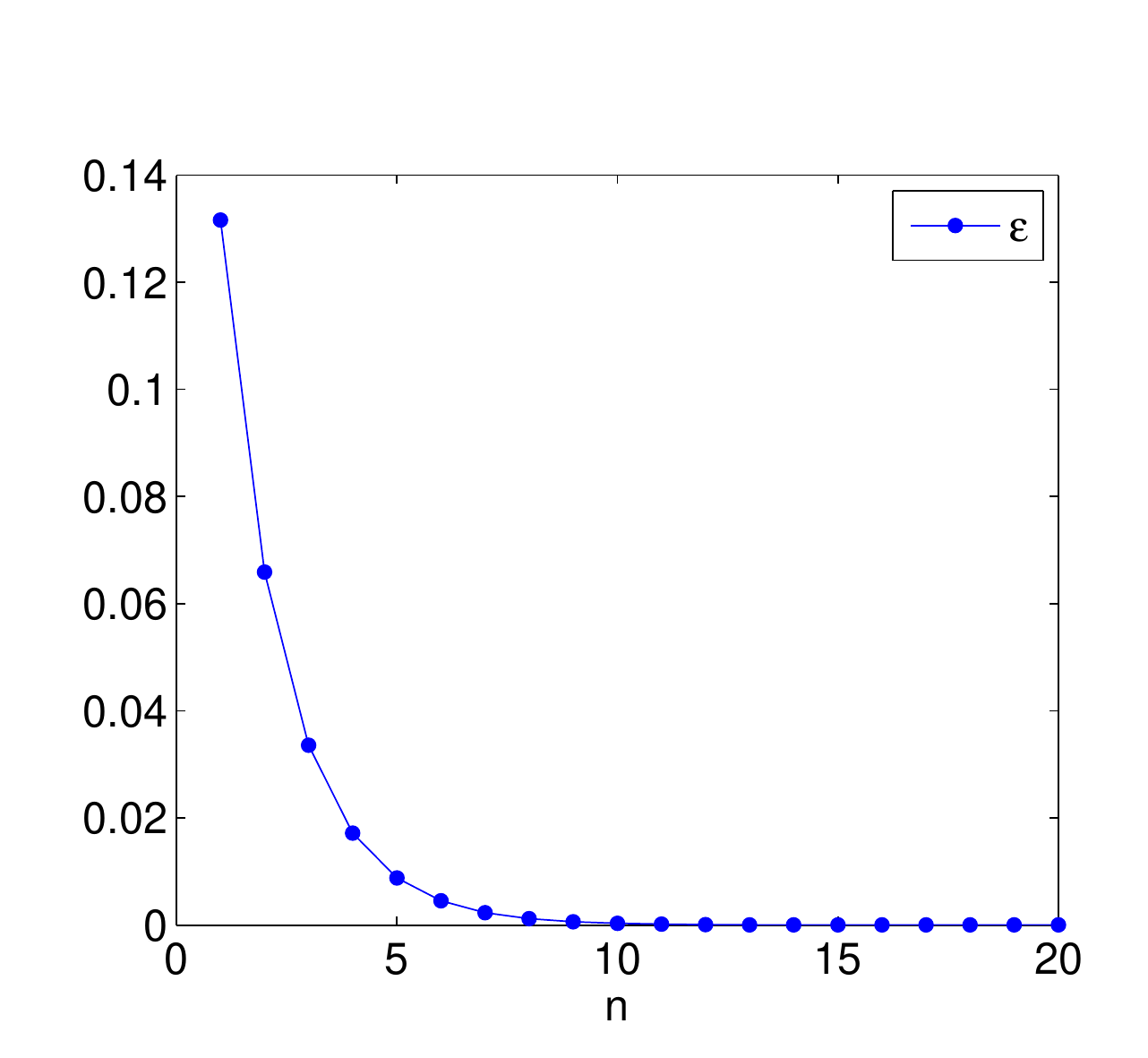}}
}
\caption{(a) A fixed point of the operator \eqref{operator:T_f}, $u^*,$ with the bump of $f_0$-field model, $u_0,$ and the bumps of the $f_\tau$-field model, $u^{st}_\tau$ and $u_\tau,$ given on $[\Delta_\tau, \Delta_0].$ The connectivity $\omega$ is as in \eqref{eq:Mexican-hat} and $f$ as in \eqref{eq:logoid}. The parameters are given as in the text. (b) The error sequence defined as in \eqref{eq:errors}. }\label{Fig4}
\end{figure}

In Fig.\ref{Fig5} we have plotted the
graph of the bump solution to the $f$-field model obtained as the extension of $u^*$ from $[\Delta_\tau,\Delta_0]$ to $\R,$ see Theorem \ref{th:extension}

\begin{figure}[h]
\centering
\scalebox{0.5}{\includegraphics{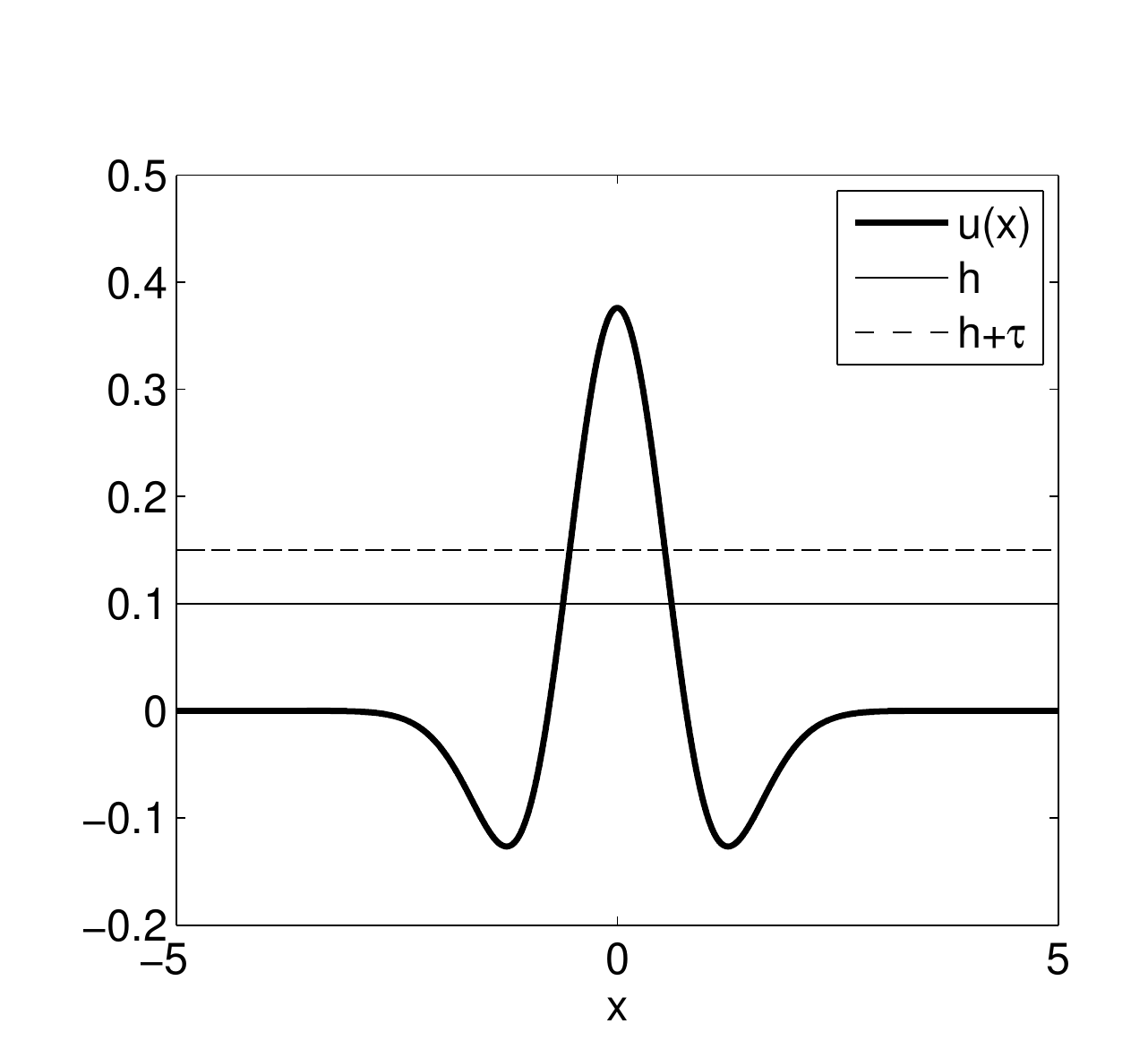}}
\caption{ The bump solution of the $f$-field constructed from $u^*,$ see Fig.\ref{Fig4}(a). The parameters used are as in the text. }\label{Fig5}
\end{figure}

Assumptions 1-5 are satisfied for some set of parameters when $\omega$ is given as in \eqref{eq:w-ex2}. We, however, do not provide this example here.
\section{Iteration Scheme II: Bumps Width Iteration} \label{sec:IIa}
In \cite{Coombes&Schmidt} an iteration procedure for construction of bump solutions to the $f$-field model has been worked out. However, a mathematical verification of the procedure  has not been given. In the present section we introduce an iteration scheme which is based on the idea presented in \cite{Coombes&Schmidt} and give a mathematical verification of this approach.
we illustrate the results with a numerical example using the same parameters as in Section \ref{Sec:Numerics:1}.

For $t \in [0,\tau]$ we assume that  there exist  the excitation width $\Delta(t)$ such that a bump solution to $f$-field model, $u_\Delta (x),$ satisfies
\begin{equation*}
u_\Delta(\pm \Delta(t))=t+h.
\end{equation*}
Then $u_{\Delta}(x)$ can be described by
\begin{equation}
\label{uDelta}
u_{\Delta}(x)=\int_0^\tau  \rho(\xi) \int_{0}^{\Delta(\xi)}r(x,y)dy d\xi
\end{equation}
using the representation \eqref{f(u):Coombes}.

Let $\cB$ be a real Banach space with partial ordering $\geq$ defined by a cone $E=\{u\in \cB| u(x)\geq 0\}.$ In this section we assume $\cB=L_2([0,\tau]).$ Then, if a bump of the $f$-field is given by \eqref{eq:u-spec} then $\Delta(t) \in \llbracket \Delta_\tau, \Delta_0 \rrbracket.$
The excitation width $\Delta$ satisfies the fixed point problem
\begin{equation}
\label{x=Ax}
\Delta=A \Delta, \quad (A\Delta)(t) = \Delta(t) + k\left(u_\Delta(\Delta(t))-t-h\right), \quad k=\const \in \R.
\end{equation}

\begin{theorem}
\label{th:Frechet}
The operator $A$ is Fr\'{e}chet differentiable in $L_2[0,\tau]$ if $f \in W^{1,\infty}(\R)$.
\end{theorem}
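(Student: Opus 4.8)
The plan is to reduce the claim to a single nonlinear term and then follow the standard route for Hammerstein/Nemytskii-type maps: exhibit a candidate bounded linear derivative and show that the first-order remainder is $o(\|\eta\|)$. Write $A=I+k(G-c)$, where $I$ is the identity on $L_2[0,\tau]$ (bounded and linear, hence everywhere differentiable with $I'=I$), $c(t)=t+h$ is a fixed element of $L_2[0,\tau]$ that contributes nothing to the derivative, and $G$ is the nonlinear operator
\begin{equation*}
(G\Delta)(t)=u_\Delta(\Delta(t))=\int_0^\tau \rho(\xi)\,\Phi(\Delta(t),\Delta(\xi))\,d\xi,
\end{equation*}
obtained by inserting \eqref{uDelta} and using $\Phi(x,y)=\int_0^y r(x,z)\,dz$. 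It therefore suffices to prove that $G$ is Fr\'echet differentiable, with $A'(\Delta)=I+kG'(\Delta)$.

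First I would write down the candidate derivative. A perturbation $\Delta\mapsto\Delta+\eta$ affects $\Phi$ both through the outer evaluation point $\Delta(t)$ and through the inner argument $\Delta(\xi)$; a first-order expansion using \eqref{eq:dPhi} produces the candidate
\begin{equation*}
(G'(\Delta)\eta)(t)=\eta(t)\int_0^\tau \rho(\xi)\,\frac{\partial\Phi}{\partial x}(\Delta(t),\Delta(\xi))\,d\xi+\int_0^\tau \rho(\xi)\,\frac{\partial\Phi}{\partial y}(\Delta(t),\Delta(\xi))\,\eta(\xi)\,d\xi,
\end{equation*}
i.e.\ a multiplication operator plus an integral operator. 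The next step is to show this is a bounded linear map on $L_2[0,\tau]$, and this is exactly where the hypothesis $f\in W^{1,\infty}(\R)$ is used: since $f(u)=\int_{-\infty}^{u}\rho$ by \eqref{f(u):Coombes}, we have $\rho=f'$ a.e., so $f\in W^{1,\infty}$ gives $\rho\in L^\infty[0,\tau]$. Combined with the boundedness of $\partial\Phi/\partial x=\omega(y+x)-\omega(y-x)$ and of $\partial\Phi/\partial y=r$ (which follows from $\omega\in BC(\R)$), this makes the multiplier $t\mapsto\int_0^\tau\rho\,\partial_x\Phi\,d\xi$ bounded, so the first term is a bounded multiplication operator, while the kernel $\rho(\xi)\,\partial_y\Phi(\Delta(t),\Delta(\xi))$ lies in $L^2([0,\tau]^2)$, so the second term is Hilbert–Schmidt, hence bounded.

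The remaining, and genuinely delicate, step is the remainder estimate $\|G(\Delta+\eta)-G(\Delta)-G'(\Delta)\eta\|_{L_2}=o(\|\eta\|_{L_2})$. The natural tool is the second-order Taylor expansion of $\Phi$, whose second derivatives ($\partial_{xx}\Phi=\omega'(y+x)+\omega'(y-x)$, and similarly $\partial_{xy}\Phi$, $\partial_{yy}\Phi$) are all uniformly bounded because $\omega\in W^{1,\infty}(\R)$. Under the $\xi$-integral, the cross term (carrying $\eta(t)\eta(\xi)$) and the inner term (carrying $\eta(\xi)^2$) are each controlled via Cauchy–Schwarz and $\rho\in L^\infty$ by $\|\eta\|_{L_2}^2=o(\|\eta\|_{L_2})$ on the bounded interval $[0,\tau]$, so they cause no trouble. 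The real difficulty is the \emph{diagonal} contribution coming from perturbing the outer evaluation point, namely $\tfrac12\,\eta(t)^2\int_0^\tau\rho(\xi)\,\partial_{xx}\Phi(\Delta(t),\Delta(\xi))\,d\xi$: its $L_2$-norm scales like $\|\eta\|_{L_4}^2$, which a crude bound does not dominate by $o(\|\eta\|_{L_2})$. I expect this superposition-type diagonal term to be the main obstacle, and it is precisely the point at which the $L_2$ setting (as opposed to a sup-norm setting) is subtle; establishing that it is indeed $o(\|\eta\|_{L_2})$ is where the bulk of the technical work — and the decisive use of the boundedness of $\rho$ together with the uniform bound on $\partial_{xx}\Phi$ — must be concentrated.
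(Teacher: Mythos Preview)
Your route differs from the paper's: instead of estimating the Taylor remainder directly, the paper computes the G\^ateaux derivative of $G$ (it coincides with your candidate $G'(\Delta)$), verifies linearity, and then invokes the standard criterion from \cite{Zeidler} that a G\^ateaux-differentiable map whose derivative depends continuously on the base point is Fr\'echet differentiable. The technical content is a lemma that $\Delta\mapsto G'_\Delta\delta$ is continuous $L_2\to L_2$ for every fixed $\delta$; the paper carries this out for the integral (Hilbert--Schmidt) half of \eqref{eq:g(delta)} via Cauchy--Schwarz/Minkowski and states that the multiplication half ``proceeds in the same way and is omitted.''

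The obstacle you isolate is not a bookkeeping detail but the heart of the matter. The outer-evaluation piece of $G$ is a Nemytskii operator $\Delta(t)\mapsto\Psi(\Delta(t))$ with $\Psi''(x)=\int_0^\tau\rho(\xi)\,\partial_{xx}\Phi(x,\Delta(\xi))\,d\xi$ generically nonzero (since $\partial_{xx}\Phi=\omega'(y{+}x)+\omega'(y{-}x)$ and $\omega\in W^{1,\infty}$, not affine). It is classical that a superposition operator $u\mapsto\psi\circ u$ is Fr\'echet differentiable on $L_p$, $p<\infty$, only when $\psi$ is affine; your $\|\eta\|_{L_4}^2$ bound is sharp, as $\eta_n=n^{1/4}\chi_{[0,\tau/n]}$ shows. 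Exactly the same failure hides in the paper's omitted step: the second summand of $G'_\Delta\delta$ yields, after the mean-value split, a term $\delta(t)\bigl(\Delta_1(t)-\Delta_2(t)\bigr)\cdot(\text{bounded})$, and $\|\delta\,(\Delta_1-\Delta_2)\|_{L_2}$ is \emph{not} controlled by $\|\delta\|_{L_2}\|\Delta_1-\Delta_2\|_{L_2}$, so the operator-norm continuity of $\Delta\mapsto G'_\Delta$ that the Zeidler criterion actually needs is not obtained. In short, you have put your finger on the genuine gap; neither your direct remainder estimate nor the paper's G\^ateaux-plus-continuity argument closes it in $L_2$ as stated.
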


\begin{proof}
Let us define the operator
\begin{equation*}
(G\Delta)(t)=u_{\Delta}(\Delta(t)).
\end{equation*}
We calculate the Fr\'{e}chet derivative of the operator $G\Delta$. To do this we first compute its G\'{a}teaux derivative
\begin{equation*}
\label{Gateaux_def}
g(\delta)=\lim_{\lambda \rightarrow 0}\dfrac{G(\Delta+\lambda \delta)-G(\Delta)}{\lambda}.
\end{equation*}
For any $\delta\in L_2[0,\tau]$ let us consider
\begin{equation}
\label{eq:Gautex:nominator}
G(\Delta+\lambda \delta)-G(\Delta)=\int_0^\tau \rho(\xi)I_1(\xi)d\xi,
\end{equation}
where
\begin{equation*}
I_1(\xi)=\int_{0}^{\Delta(\xi)+\lambda \delta(\xi)}r(\Delta(t)+\lambda \delta(t),y)dy -
\int_{0}^{\Delta(\xi)}r(\Delta(t),y)dy.
\end{equation*}
Making use of the Taylor expansion for $r(\Delta(t)+\lambda \delta(t),y)$ as a function of $\lambda$ at $\lambda=0$ we have
\begin{equation*}
\begin{split}
I_1(\xi)&=\int_{0}^{\Delta(\xi)+\lambda \delta(\xi)}\left(r(\Delta(t),y)+ \lambda \dfrac{\partial r}{\partial x}(\Delta(t),y) \delta(t)+ o(\lambda)\right)dy-\int_{0}^{\Delta(\xi)}r(\Delta(t),y)dy\\
&= \int_{\Delta(\xi)}^{\Delta(\xi)+\delta(\xi)}r(\Delta(t),y)dy+ \lambda \delta(t)\int_0^{\Delta(\xi)+\lambda \delta(\xi)} \dfrac{\partial r}{\partial x}(y,\Delta(t)) dy+ o(\lambda).
\end{split}
\end{equation*}
Plugging $I_1$ into (\ref{eq:Gautex:nominator}) and making use of the mean value theorem we get the following formula
\begin{equation}
\label{eq:g(delta)}
g(\Delta,\delta(t))= \int_0^\tau \rho(\xi)\delta(\xi) r(\Delta(t),\Delta(\xi))d\xi+\delta(t) \int_0^\tau \rho(\xi)\dfrac{\partial \Phi}{\partial x}(\Delta(t),\Delta(\xi))d\xi.
\end{equation}

Hence, we arrive at the conclusion that the G\'{a}teaux derivative is a linear operator. In order to prove Fr\'{e}chet differentiability of the operator $G$ we show, in accordance with \cite{Zeidler}, that $g(\cdot,\delta): L_2[0,\tau]\rightarrow L_2[0,\tau]$ is a continuous operator for all $\delta \in L_2[0,\tau]$. The proof of this fact is technical and we therefore formulate it as a separate lemma:

\begin{lemma}
The operator $g(\cdot,\delta): L_2[0,\tau]\rightarrow L_2[0,\tau]$ is  continuous  for all $\delta \in L_2[0,\tau].$
\end{lemma}
\begin{proof}

We consider the first and the second integral of \eqref{eq:g(delta)} separately as the operators of $\Delta.$  Using  the Cauchy-Schwarz  and Minkowskii inequalities we show that these operators are continuous and, thus, $g(\cdot,\delta)$ is continuous as well, for any $\delta \in L_2[0,\tau].$
We present the proof for the first integral operator. The proof of continuity for the second term proceeds in the same way and is omitted.

We introduce
\begin{equation*}
(F\Delta)(t)=\int_0^\tau \rho(\xi)\delta(\xi) r(\Delta(t),\Delta(\xi))d\xi.
\end{equation*}
We obtain
\begin{equation*}
\begin{split}
(F\Delta_1- F\Delta_2)(t)=\int_0^\tau \rho(\xi)\delta(\xi)& \left( r(\Delta_1(t),\Delta_1(\xi))- r(\Delta_1(t),\Delta_2(\xi))+\right.\\
&\left.+ r(\Delta_1(t),\Delta_2(\xi))- r(\Delta_2(t),\Delta_2(\xi))\right)d\xi=I_1(t)+I_2(t)
\end{split}
\end{equation*}
where by the mean value theorem $I_1$ and $I_2$ can be defined as
\begin{equation*}
I_1(t)=\int_0^\tau \rho(\xi)\delta(\xi) \dfrac{\partial r}{\partial y}(\Delta_1(t),\tilde{\Delta}_1(\xi))(\Delta_1(\xi)-\Delta_2(\xi))d\xi
\end{equation*}
\begin{equation*}
I_2(t)=\int_0^\tau \rho(\xi)\delta(\xi)\dfrac{\partial r}{\partial x}(\tilde{\Delta}_2(t),\Delta_2(\xi))(\Delta_1(t)-\Delta_2(t))d\xi
\end{equation*}
with $\tilde{\Delta}_k=\lambda_k \Delta_1+(1-\lambda_k) \Delta_2,$ for some $\lambda_k \in [0,1],$ $k=1,2.$

We consider the norm of the difference. Using the Minkowskii inequality we get
\begin{equation*}
\begin{split}
||F\Delta_1-F\Delta_2||_{L_2[0,\tau]}=\left(\int_0^\tau(I_1(t)+I_2(t))^2dt\right)^{1/2} \leq  \\
\leq \left(\int_0^\tau|I_1(t)|^2 dt \right)^{1/2}+ \left(\int_0^\tau|I_2(t)|^2 dt\right)^{1/2}
\end{split}
\end{equation*}
Applying the Cauchy - Schwarz inequality to each of the terms we have
\begin{equation*}
\begin{array}{l}
||F\Delta_1-F\Delta_2||_{L_2[0,\tau]}\leq\\
\leq \left(\int \limits_0^{\tau} \int\limits_0^\tau  \left| \rho(\xi) \delta(\xi) \dfrac{\partial r}{\partial y}
(\Delta_1(t),\tilde{\Delta}_1(\xi))\right|^2 d\xi \, \int\limits_0^\tau |\Delta_1(\xi)-\Delta_2(\xi)|^2d\xi \,dt\right)^{1/2}+\\
+ \left(\int\limits_0^{\tau} \int\limits_0^\tau  \left| \rho(\xi) \delta(\xi) \dfrac{\partial r}{\partial y}(\Delta_1(t),\tilde{\Delta}_1(\xi))\right|^2 d\xi \, \int\limits_0^\tau |\Delta_1(t)-\Delta_2(t)|^2 d\xi \,dt\right)^{1/2}
\end{array}
\end{equation*}
Since $r\in W^{1,\infty}(\R \times \R),$  $\rho \in L_\infty(\R)$, and $\delta \in L_2[0,\tau]$ the following estimate is valid
\begin{equation*}
\begin{array}{l}
\int\limits_0^\tau  \left| \rho(\xi) \delta(\xi) \dfrac{\partial r}{\partial y}(\Delta_1(t),\tilde{\Delta}_1(\xi))\right|^2 d\xi  \leq C^2/2\\
\int\limits_0^\tau  \left| \rho(\xi) \delta(\xi) \dfrac{\partial r}{\partial x}(\tilde{\Delta}_2(t),{\Delta}_1(\xi))\right|^2 d\xi \leq C^2/2
\end{array}
\end{equation*}
where
$$C^2=2||\rho||_{L_{\infty}(\R)} \max \{||\dfrac{\partial r}{\partial x}||_{L_{\infty}(\R \times \R)}, ||\dfrac{\partial r}{\partial y}||_{L_\infty(\R \times \R)}\} ||\delta||_{L_2[0,\tau]}^2.$$
Therefore, we get the following inequality
\begin{equation*}
||F\Delta_1-F\Delta_2||_{L_2[0,\tau]}\leq |C|\sqrt{\tau} ||\Delta_1-\Delta_2||_{L_2[0,\tau]}
\end{equation*}
from which the continuity of $F$ follows.
\end{proof}
For convenience we make redefinition:  $g(\Delta,\delta)=G_{\Delta}'\delta.$
Obviously, the operator $A$ is Fr\'{e}chet differentiable in any $\Delta \in L_2[0,\tau]$ and
\begin{equation}
\label{eq:A'}
A_\Delta'=I+kG_{\Delta}'
\end{equation}
\end{proof}
We have the following lemma:
\begin{lemma}
\label{lemma:A'>0}
The operator $A_{\Delta}'\delta \geq 0$ for $\delta \geq 0$ and $\Delta \in \llbracket \Delta_\tau, \Delta_0 \rrbracket$ under Assumption \ref{As:2} and \ref{As:3} and $0<k<1/m,$ where
\begin{equation}
\label{eq:m}
m=-\min \limits_{t,\xi \in [0,\tau]} \dfrac{\partial \Phi}{\partial x}(\Delta(t),\Delta(\xi)).
\end{equation}
\end{lemma}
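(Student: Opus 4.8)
The plan is to substitute the explicit formula \eqref{eq:g(delta)} for $G_\Delta'$ into the expression \eqref{eq:A'} for $A_\Delta'$ and then to display $(A_\Delta'\delta)(t)$ as a sum of two non-negative contributions: a ``diagonal'' term carrying the factor $\delta(t)$, and a manifestly positive integral kernel term. Concretely, since $A_\Delta'=I+kG_\Delta'$ and $G_\Delta'\delta=g(\Delta,\delta)$, I would regroup
\begin{equation*}
(A_\Delta'\delta)(t)=\delta(t)\left[1+k\int_0^\tau \rho(\xi)\dfrac{\partial \Phi}{\partial x}(\Delta(t),\Delta(\xi))\,d\xi\right]+k\int_0^\tau \rho(\xi)\delta(\xi)\, r(\Delta(t),\Delta(\xi))\,d\xi,
\end{equation*}
and then argue that each of the two summands is non-negative whenever $\delta\geq 0$, $\Delta \in \llbracket \Delta_\tau,\Delta_0\rrbracket$, and $0<k<1/m$.

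For the second summand I would observe that $\Delta\in\llbracket\Delta_\tau,\Delta_0\rrbracket$ forces $\Delta(t),\Delta(\xi)\in[\Delta_\tau,\Delta_0]$ for all $t,\xi$, so Assumption \ref{As:2} yields $r(\Delta(t),\Delta(\xi))\geq 0$. Combined with $k>0$, the positivity of $\rho$, and $\delta\geq 0$, the entire integral is non-negative, with no further work required. For the first summand I factor out $\delta(t)\geq 0$ and estimate the bracket from below. By the very definition \eqref{eq:m} of $m$ we have $\partial \Phi/\partial x(\Delta(t),\Delta(\xi))\geq -m$ pointwise, and since $\rho\geq 0$ is normalized with $\supp\{\rho\}=[0,\tau]$ (so that $\int_0^\tau\rho\,d\xi=1$) this gives
\begin{equation*}
\int_0^\tau \rho(\xi)\dfrac{\partial \Phi}{\partial x}(\Delta(t),\Delta(\xi))\,d\xi\geq -m .
\end{equation*}
Hence the bracket is at least $1-km$, which is strictly positive precisely when $0<k<1/m$; multiplying by $\delta(t)\geq 0$ preserves the sign. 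Adding the two non-negative terms gives $(A_\Delta'\delta)(t)\geq 0$, as claimed.

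The argument is almost entirely sign-chasing, so I expect no serious obstacle; the only point deserving care is the well-posedness of the threshold $1/m$, i.e.\ that $m$ in \eqref{eq:m} is finite and strictly positive so that $(0,1/m)$ is a non-empty interval. Finiteness follows from continuity of $\partial \Phi/\partial x$ (inherited from the smoothness of $\omega$) together with compactness of $[0,\tau]^2$, which guarantees the minimum in \eqref{eq:m} is attained; positivity of $m$ is exactly what Assumption \ref{As:3} secures, since it forces $\partial \Phi/\partial x$ to be negative on the relevant range of widths, making the minimum negative. I would state this verification explicitly before concluding the estimate, as it is the one place where Assumption \ref{As:3} is genuinely used rather than the straightforward non-negativity bookkeeping.
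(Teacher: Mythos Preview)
Your proof is correct and follows the same approach as the paper's, which is much terser: the paper simply observes that $\partial\Phi/\partial x$ is bounded-continuous so $m$ is finite, invokes Assumption~\ref{As:3} to conclude $m>0$, and then asserts without further detail that $A'_\Delta$ preserves positivity for $0<k<1/m$. Your explicit decomposition into the diagonal and integral-kernel summands, together with the sign-chasing using Assumption~\ref{As:2} and the normalization of $\rho$, is precisely the computation the paper leaves implicit.
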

\begin{proof}
First of all, we notice that $\partial \Phi/\partial x \in BC(\R) \times BC(\R).$ Thus, there exists a finite minimum of $\partial \Phi/\partial x$ on the given set. Moreover, this minimum is negative according to Assumption \ref{As:3}.
Therefore, $m$ given by \eqref{eq:m} is finite and positive, and  the operator $A'_\Delta$ preserves  positivity for   $0<k<1/m.$
\end{proof}

\begin{theorem}
\label{th:existence2}
Let the conditions of Theorem \ref{th:Frechet} and Lemma \ref{lemma:A'>0} be satisfied. Then the operator $A: \llbracket\Delta_\tau, \Delta_0 \rrbracket \rightarrow D \subset L_2[0,\tau]$ has a fixed point  in $ \llbracket \Delta_\tau, \Delta_0 \rrbracket.$ Moreover, the sequences $\{A^n \Delta_\tau\}$ and $\{A^n \Delta_0\}$ converge to the smallest and greatest fixed point of the operator $A,$ respectively.
\end{theorem}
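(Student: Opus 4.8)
The plan is to verify the hypotheses of Theorem~\ref{th:G&L:1} for the operator $A$ on the order interval $\llbracket \Delta_\tau, \Delta_0 \rrbracket$ and then simply read off the conclusion. Since we work in $\cB = L_2[0,\tau]$, the cone $E$ is regular by Theorem~\ref{th:cone}, so I would invoke hypothesis (H2). For this I must establish three things: that $A$ is increasing on $\llbracket \Delta_\tau, \Delta_0 \rrbracket$, that $\Delta_\tau \leq A\Delta_\tau$ and $A\Delta_0 \leq \Delta_0$, and that $A$ is semicontinuous. The last requirement is immediate, since by Theorem~\ref{th:Frechet} the operator $A$ is Fr\'echet differentiable on $L_2[0,\tau]$, hence norm-continuous, and strong continuity trivially implies the weak semicontinuity demanded in (H2).

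The main work is monotonicity, which I would deduce from the positivity of the Fr\'echet derivative already established in Lemma~\ref{lemma:A'>0}. Given comparable elements $\Delta_1 \leq \Delta_2$ in $\llbracket \Delta_\tau, \Delta_0 \rrbracket$, the whole segment $\Delta_s := \Delta_1 + s(\Delta_2 - \Delta_1)$, $s \in [0,1]$, remains in the convex order interval, so the formula \eqref{eq:A'} for $A_{\Delta_s}'$ and the positivity statement of Lemma~\ref{lemma:A'>0} apply at every point of the segment. The integral form of the mean value theorem for Fr\'echet differentiable maps then yields
\begin{equation*}
A\Delta_2 - A\Delta_1 = \left( \int_0^1 A_{\Delta_s}'\, ds \right)(\Delta_2 - \Delta_1),
\end{equation*}
and since $\Delta_2 - \Delta_1 \geq 0$ while each $A_{\Delta_s}'$ preserves positivity for $0 < k < 1/m$, the integrand is a nonnegative element of the closed cone $E$, so $A\Delta_2 - A\Delta_1 \geq 0$. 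I expect this to be the principal obstacle: it requires the strong continuity of $\Delta \mapsto A_\Delta'$ so that the Bochner integral is well defined, together with the closedness of $E$ to pass positivity through the integral. The differentiability and regularity secured in the preceding lemmas are precisely what make this argument go through.

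It remains to verify the two boundary inequalities, which follow by direct computation. Taking $\Delta(\xi) \equiv \Delta_\tau$ constant in \eqref{uDelta} and using that $\rho$ is normalized gives $u_{\Delta_\tau} = \Phi(\cdot, \Delta_\tau) = u_\tau$, so $u_{\Delta_\tau}(\Delta_\tau) = \Phi(\Delta_\tau, \Delta_\tau) = h + \tau$ by the defining relation for the $f_\tau$-field bump (Theorem~\ref{th:a-solutions} applied with threshold $h+\tau$, cf.\ \eqref{eq:antideriv} and Assumption~\ref{As:1}); consequently
\begin{equation*}
(A\Delta_\tau)(t) = \Delta_\tau + k(h + \tau - t - h) = \Delta_\tau + k(\tau - t) \geq \Delta_\tau, \quad t \in [0,\tau].
\end{equation*}
Likewise $u_{\Delta_0} = u_0$ and $u_{\Delta_0}(\Delta_0) = \Phi(\Delta_0, \Delta_0) = h$, whence $(A\Delta_0)(t) = \Delta_0 - kt \leq \Delta_0$ for $t \in [0,\tau]$. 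With monotonicity, these two inequalities, and semicontinuity all in hand, Theorem~\ref{th:G&L:1} under hypothesis (H2) delivers a minimal fixed point $x_* = \lim_{n\to\infty} A^n \Delta_\tau$ and a maximal fixed point $x^* = \lim_{n\to\infty} A^n \Delta_0$ in $\llbracket \Delta_\tau, \Delta_0 \rrbracket$, which is exactly the assertion.
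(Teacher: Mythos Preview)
Your proposal is correct and follows essentially the same route as the paper: verify monotonicity of $A$ via positivity of $A_\Delta'$ from Lemma~\ref{lemma:A'>0}, check the boundary inequalities $A\Delta_\tau \geq \Delta_\tau$ and $A\Delta_0 \leq \Delta_0$ by direct computation using $u_{\Delta_\tau}=u_\tau$, $u_{\Delta_0}=u_0$, and then invoke Theorem~\ref{th:G&L:1} under (H2) with continuity coming from Fr\'echet differentiability.

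The one noteworthy difference is in the monotonicity step. The paper writes the pointwise mean-value identity $A\Delta_2 - A\Delta_1 = A_\Delta'(\Delta_2-\Delta_1)$ for some intermediate $\Delta$, which is not generally valid for maps into an infinite-dimensional Banach space. Your use of the integral representation
\[
A\Delta_2 - A\Delta_1 = \int_0^1 A_{\Delta_s}'(\Delta_2-\Delta_1)\,ds,
\]
together with positivity of each $A_{\Delta_s}'$ and closedness of the cone, is the rigorous version of the same idea and in fact repairs a small technical gap in the paper's argument. The remaining regularity requirement you flag (continuity of $s\mapsto A_{\Delta_s}'(\Delta_2-\Delta_1)$) follows from the continuity of the G\^ateaux derivative established inside the proof of Theorem~\ref{th:Frechet}.
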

\begin{proof}

The operator $A$ is monotonically increasing. Indeed, we let $\Delta_2 \geq \Delta_1.$ Then $A \Delta_2 -A\Delta_1=A_{\Delta}'(\Delta_2-\Delta_1)$ where $\Delta \in \llbracket\Delta_1, \Delta_2 \rrbracket \subset \llbracket\Delta_\tau, \Delta_0 \rrbracket.$ Using Lemma \ref{lemma:A'>0} we conclude that $A$ is monotone.

 The operator $A$ is Fr\'{e}chet differentiable, and hence continuous in $L_2[0,\tau]$ (see Lemma \ref{th:Frechet}). Moreover, we have the following inequalities
\begin{equation*}
(A \Delta_0)(t)=\Delta_0+k(u_{\Delta_0}(\Delta_0)-t-h)= \Delta_0 + k(u_0(\Delta_0)-t-h)=\Delta_0-kt\leq \Delta_0.
\end{equation*}
and
\begin{equation*}
(A \Delta_\tau)(t)=\Delta_\tau+k(u_{\Delta_\tau}(\Delta_\tau)-t-h)= \Delta_\tau + k(u_\tau(\Delta_\tau)-t-h)=\Delta_0+k(\tau-t)\geq \Delta_\tau.
\end{equation*}
Applying Theorem \ref{th:G&L:1}we complete the proof.
\end{proof}

\begin{remark}
\label{remark:1}
We prove Theorem \ref{th:existence2} for the case when $D\in L_2[0,\tau]$ but do not consider the case $D\in C[0,\tau].$ The cone of positive functions in $C[0,\tau]$ is not regular. Therefore additional assumptions on the operator $A$ are required (see Theorem \ref{th:G&L:1}). We notice that $A$ is not compact in $C[0,\tau].$ Indeed, the operator $A$ is a Fr\'{e}chet differentiable with $A_\Delta'$ defined as in \eqref{eq:A'} where $A_\Delta'$ is a sum of the identity operator and a compact operator, thus is not compact. Therefore, $A$ is not a compact operator, see \cite{Zeidler}. The operator $A$ does not seem to be condensing either, at least with respect to the Hausdorff measure. The case of more general measures of noncompactness \cite{Guo&La,AKPS1982} is not considered here.
\end{remark}

It remains to show that $u_\Delta,$ where $\Delta$ is the fixed point of \eqref{x=Ax}, is a bump.
The definition of  $u_\Delta$ requires $\Delta(t)$ monotonically decreasing. We introduce the assumption.
\begin{matheorem}{Assumption 3$'$}
\label{As:3'}
The partial derivative of $\Phi$ with respect to $x$ is negative for  $x=\Delta(t),$ $y=\Delta(s)$ for $t,s \in [0,\tau]$ and $\Delta(t)$ is a fixed point of \eqref{x=Ax}, i.e.,
\begin{equation*}
\dfrac{\partial \Phi}{\partial x}(\Delta(t),\Delta(s))<0 ,\: \forall t,s\in [0, \tau].
\end{equation*}
\end{matheorem}

\begin{lemma}
The fixed point $\Delta(t)$ of operator $A$ is monotonically decreasing and differentiable on $[0,\tau]$  under Assumption 3$'$.
\end{lemma}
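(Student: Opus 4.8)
The plan is to reduce the fixed-point relation to a scalar identity by cancelling $\Delta(t)$ and the nonzero factor $k$ in \eqref{x=Ax}: a fixed point $\Delta$ of $A$ is characterised by
\begin{equation*}
u_\Delta(\Delta(t))=t+h,\qquad t\in[0,\tau].
\end{equation*}
Once the fixed point $\Delta$ is frozen, $u_\Delta$ becomes a genuine function of the single variable $x$,
\begin{equation*}
v(x):=u_\Delta(x)=\int_0^\tau \rho(\xi)\,\Phi(x,\Delta(\xi))\,d\xi .
\end{equation*}
First I would check that $v\in C^1(\R)$ by differentiating under the integral sign, which is legitimate because $\dfrac{\partial\Phi}{\partial x}(x,y)=\omega(y+x)-\omega(y-x)$ is continuous and bounded ($\omega\in BC(\R)$) and $\rho\in L_\infty(\R)$ with $\supp\rho=[0,\tau]$; this gives
\begin{equation*}
v'(x)=\int_0^\tau \rho(\xi)\,\dfrac{\partial\Phi}{\partial x}(x,\Delta(\xi))\,d\xi .
\end{equation*}

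The next step is to fix the sign of $v'$ along the range of $\Delta$. By Assumption 3$'$ we have $\dfrac{\partial\Phi}{\partial x}(\Delta(t),\Delta(s))<0$ for all $t,s\in[0,\tau]$; since $\rho\ge 0$ is normalised, evaluating at $x=\Delta(t)$ yields
\begin{equation*}
v'(\Delta(t))=\int_0^\tau \rho(\xi)\,\dfrac{\partial\Phi}{\partial x}(\Delta(t),\Delta(\xi))\,d\xi<0 .
\end{equation*}
Thus $v'$ is strictly negative at every point of the range of $\Delta$, so $v$ is locally strictly decreasing there and, by the inverse function theorem, admits a local $C^1$ inverse near each such point. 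The identity $v(\Delta(t))=t+h$ then reads locally as $\Delta(t)=v^{-1}(t+h)$, whence $\Delta$ is differentiable with
\begin{equation*}
\Delta'(t)=\dfrac{1}{v'(\Delta(t))}<0,
\end{equation*}
which delivers both the differentiability and the strict decrease asserted by the lemma.

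The step I expect to be the main obstacle is the regularity bootstrap needed to run the inverse function theorem cleanly: a priori the fixed point produced by Theorem \ref{th:existence2} is only an element of $L_2[0,\tau]$, so to identify $\Delta$ locally with the $C^1$ branch $v^{-1}(\cdot+h)$ one must first know that $\Delta$ is continuous and selects a single branch of $v^{-1}$. My plan to close this gap is to exploit that the iteration generating the fixed point already produces continuous, monotone functions: as computed in the proof of Theorem \ref{th:existence2}, $(A\Delta_0)(t)=\Delta_0-kt$ and $(A\Delta_\tau)(t)=\Delta_\tau+k(\tau-t)$ are continuous and decreasing, and I would verify that $A$ preserves continuity and monotone decrease on $\llbracket\Delta_\tau,\Delta_0\rrbracket$ so that these properties pass to the $L_2$-limit (the monotone, uniformly bounded iterates admit a pointwise-convergent subsequence whose limit is again monotone, hence coincides a.e.\ with the $L_2$-limit). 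Once $\Delta$ is known to be continuous, its range is the interval $[\Delta(\tau),\Delta(0)]$, every point of which is of the form $\Delta(t)$; the sign computation above then gives $v'<0$ on this whole interval, so $v$ is \emph{globally} strictly decreasing there and $\Delta=v^{-1}(\cdot+h)$ is a single $C^1$ function on all of $[0,\tau]$, completing the argument.
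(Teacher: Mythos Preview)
Your argument is essentially the paper's: both reduce the fixed-point relation to $u_\Delta(\Delta(t))=t+h$, differentiate, and use Assumption~3$'$ together with the positivity of $\rho$ to conclude that $\Delta'(t)=\bigl(\int_0^\tau\rho(\xi)\,\partial_x\Phi(\Delta(t),\Delta(\xi))\,d\xi\bigr)^{-1}<0$. The paper simply differentiates the identity and reads off the sign, while you frame the same computation via the inverse function theorem applied to $v(x)=u_\Delta(x)$; the resulting formula is identical.

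Where your write-up goes beyond the paper is in the regularity bootstrap: the paper tacitly assumes $\Delta$ is differentiable in order to compute $\Delta'$, whereas you correctly flag that the fixed point is a priori only an $L_2$ object and sketch how to upgrade it to continuity (so that the inverse function theorem applies globally on the range of $\Delta$). That extra care is a genuine improvement over the paper's short proof, which leaves this gap unaddressed.
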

\begin{proof}
Since $\Delta(t)$ is a solution of the fixed point problem \eqref{x=Ax} then $u_\Delta(\Delta(t))=t+h.$
We prove the lemma by direct differentiation of the last equality with respect to $t.$ We obtain
\begin{equation*}
\int_0^\tau \rho(\xi) \dfrac{\partial \Phi}{\partial x}(\Delta(t),\Delta(\xi)) \Delta'(t) d\xi=1.
\end{equation*}
Thus,
\begin{equation*}
\Delta'(t)=\left(\int_0^\tau \rho(\xi) \dfrac{\partial \Phi}{\partial x}(\Delta(t),\Delta(\xi)) d\xi \right)^{-1} <0
\end{equation*}
as $\dfrac{\partial \Phi }{\partial x} (\Delta(t),\Delta(\xi))<0$ by Assumption 3$'$.
\end{proof}

 Assumption 3$'$  requires an apriori knowledge of $\Delta(t)$ and therefore can not be checked before $\Delta(t)$ is found. Thus, we suggest to replace this assumption with the following one:

\begin{matheorem}{Assumption 3$''$}
\label{As:3''}
The partial derivative of $\Phi$ with respect to $x$ is negative for all  $x,y\in [\Delta_\tau, \Delta_0],$ i.e.,
\begin{equation*}
\dfrac{\partial \Phi}{\partial x}(x,y)<0 ,\: \forall x,y\in [\Delta_\tau, \Delta_0].
\end{equation*}
\end{matheorem}
The fulfillment of Assumption 3$''$ implies that Assumption 3 and Assumption 3$'$ are satisfied.

In addition to Assumption 3$'$ (or 3$''$) we have the following requirement:

\begin{matheorem}{Assumption 5$'$}
\label{As:5'}
The function $\Phi$ is such that
\begin{itemize}
\item[(i)] $\Phi(x,y) \leq h, \: \forall x> \Delta(0), \; y \in [\Delta(\tau), \Delta(0)],$\\
\item[(ii)] $\Phi(x,y) \geq h+\tau, \: \forall x \in [ 0,\Delta(\tau)],\; y \in [\Delta(\tau), \Delta(0)].$
\end{itemize}
\end{matheorem}

\begin{theorem}\label{th:extention2}
Let $\Delta$ be a fixed point refereed to in Theorem \ref{th:existence2}. Then  $u_\Delta$ defined as \eqref{uDelta} is a bump solution to \eqref{model} under Assumptions 3$''$  and 5$'$.
\end{theorem}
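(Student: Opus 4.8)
The plan is to verify that $u_\Delta$, assembled from the fixed point $\Delta$ through \eqref{uDelta}, simultaneously solves the stationary equation \eqref{steady state} and displays the nested-interval structure demanded by Definition \ref{def:f-bump}. I would first read off from the fixed-point identity $\Delta=A\Delta$ and the definition of $A$ in \eqref{x=Ax} that the correction term must vanish, i.e.
\[
u_\Delta(\Delta(t))=t+h,\qquad t\in[0,\tau].
\]
Symmetry $u_\Delta(-x)=u_\Delta(x)$ is immediate from $r(-x,y)=r(x,y)$ (evenness of $\omega$), whence $\Phi(-x,y)=\Phi(x,y)$. Since Assumption 3$''$ implies Assumption 3$'$, the preceding lemma makes $\Delta(\cdot)$ strictly decreasing on $[0,\tau]$, so that $\Delta(\xi)\in[\Delta(\tau),\Delta(0)]\subseteq[\Delta_\tau,\Delta_0]$ for every $\xi$.

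Differentiating \eqref{uDelta} gives $u_\Delta'(x)=\int_0^\tau\rho(\xi)\,\tfrac{\partial\Phi}{\partial x}(x,\Delta(\xi))\,d\xi$, which by Assumption 3$''$ is strictly negative for $x\in[\Delta(\tau),\Delta(0)]$; thus $u_\Delta$ decreases strictly across the core interval from $u_\Delta(\Delta(\tau))=h+\tau$ down to $u_\Delta(\Delta(0))=h$. Outside the core I would invoke Assumption 5$'$: part (ii) forces $u_\Delta(x)\ge h+\tau$ for $x\in[0,\Delta(\tau)]$ and part (i) forces $u_\Delta(x)\le h$ for $x>\Delta(0)$, in both cases by integrating the pointwise bounds on $\Phi$ against $\rho$ and using $\int_0^\tau\rho=1$. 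Combining the three regimes with the crossing relation pins down the super-level sets: for each $\xi\in(0,\tau]$,
\[
\{\,y:\ u_\Delta(y)\ge h+\xi\,\}=[-\Delta(\xi),\Delta(\xi)].
\]

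The decisive step is then to feed this into the layer-cake representation \eqref{f(u):Coombes}, $f(u_\Delta(y)-h)=\int_0^\tau\rho(\xi)\,\theta(u_\Delta(y)-h-\xi)\,d\xi$. Inserting it into the right-hand side of \eqref{steady state}, interchanging the order of integration, and restricting the inner $y$-integral to the level set just identified, the kernel contribution becomes $\int_{-\Delta(\xi)}^{\Delta(\xi)}\omega(y-x)\,dy$, which symmetrizes via $\omega(-y-x)=\omega(y+x)$ to $\int_0^{\Delta(\xi)}r(x,y)\,dy=\Phi(x,\Delta(\xi))$. Hence the right-hand side collapses to $\int_0^\tau\rho(\xi)\Phi(x,\Delta(\xi))\,d\xi=u_\Delta(x)$, so $u_\Delta$ is a stationary solution of \eqref{model}. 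The same level-set description yields $R[u_\Delta]=(-\Delta(0),\Delta(0))$ and $R^*[u_\Delta]=(-\Delta(\tau),\Delta(\tau))$, the latter an interval nested inside the former and separated from its complement by the incompletely excited region $R^-[u_\Delta]$; together with symmetry this is exactly the bump structure of Definition \ref{def:f-bump}.

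The main obstacle is the level-set identification. Assumption 3$''$ controls monotonicity only on the core $[\Delta_\tau,\Delta_0]$, so for $x$ outside it I must rely entirely on the inequalities of Assumption 5$'$ to exclude spurious recrossings of the levels $h$ and $h+\tau$; these bounds must hold uniformly in the second argument $y\in[\Delta(\tau),\Delta(0)]$, which is precisely why Assumption 5$'$ is phrased with that range. A minor delicacy occurs at the endpoint $x=\Delta(\tau)$, where $u_\Delta=h+\tau$ exactly: strictness of $u_\Delta>h+\tau$ on the open interval $(-\Delta(\tau),\Delta(\tau))$, needed so that $R^*$ is genuinely open, rests on the strict decrease just to the right of $\Delta(\tau)$ and on part (ii) to its left, whereas for the integral identity only the almost-everywhere level set matters and the endpoints are irrelevant.
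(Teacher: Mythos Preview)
Your proposal is correct and follows essentially the same approach as the paper: rewrite $u_\Delta(x)=\int_0^\tau\rho(\xi)\Phi(x,\Delta(\xi))\,d\xi$ and then integrate the pointwise bounds of Assumption~5$'$ against the normalized density $\rho$ to force $u_\Delta\ge h+\tau$ on $[0,\Delta(\tau)]$ and $u_\Delta\le h$ beyond $\Delta(0)$. You are, however, considerably more thorough than the paper, which stops after those two inequalities; your explicit layer-cake verification that $u_\Delta$ actually solves \eqref{steady state}, together with the level-set identification and the check of Definition~\ref{def:f-bump}, fills in steps the paper leaves implicit.
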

\begin{proof}
We rewrite \eqref{uDelta} as
\begin{equation*}
u_\Delta(x)=\int_0^{\tau}\rho(\xi)\Phi(x,\Delta(\xi)) d\xi.
\end{equation*}
Next, we make use of Assumption 5$'$. Keeping in mind the normalization property of $\rho$ we show that
\begin{equation*}
\begin{array}{ll}
u_\Delta(x)\leq h, & \forall x> \Delta(0), \; y \in [\Delta(\tau), \Delta(0)],\\
\\
u_\Delta(x)\geq h+\tau, &\forall x \in [ 0,\Delta(\tau)],\; y \in [\Delta(\tau), \Delta(0)].
\end{array}
\end{equation*}
\end{proof}

\begin{remark}
For operator $T_f$ we use Assumptions \ref{As:1}-\ref{As:5}, and   Assumptions \ref{As:1}-\ref{As:2}, 3$''$ and 5$'$ for the operator $A.$ Assumptions 3$''$ and 5$'$ are more restrictive than Assumptions \ref{As:3} and \ref{As:5}. Moreover Assumption \ref{As:5} needs information about the fixed point $\Delta(t)$ which is a disadvantage. On the other hand, the operator $T_f$ requires one extra assumption, Assumption \ref{As:4}.
\end{remark}

\subsection{Numerical example}\label{Sec:Numerics:2}
Let $\omega(x),$ $h,$ $\tau$ and $\Delta_\tau,$ $\Delta_\tau^{st},$ $\Delta_0$ are chosen as in Section \ref{Sec:Numerics:1}.
Them, as we have mentioned before, Assumptions \ref{As:1},\ref{As:2}, and \ref{As:3} hold true. Hence, we can apply Theorem \ref{th:existence2} and obtain $\Delta(t).$ In Fig.\ref{Fig6}(a) we illustrate the result of the iteration process.  In Fig.\ref{Fig6}(b) we have plotted the errors calculated as
\begin{equation}
\label{eq:errors:2}
\varepsilon(n)=\max\limits_{x}|(A^{n}|\Delta_0)(t)-(A^{n}\Delta_\tau)(t)|, \quad n=1,2,...,N.
\end{equation}
Similar as in \eqref{eq:errors}, $A^{0}$ defines the identity operator, $n$ corresponds to the iteration number, and  $N$ denotes the total number of iterations.
In our calculations $\varepsilon(n)<10^{-5}$ for $n\geq 13,$ and the minimal and maximal fixed points converges to each other.  Thus, the fixed point is unique, see Corollary \ref{corollary:1}.
We also observe that the fixed point $\Delta(t)$ belongs to $\llbracket \Delta^{st}_\tau, \Delta_0 \rrbracket,$ see Fig.\ref{Fig6}(a).
\begin{figure}[h]
\centering
\subfigure[]{
\scalebox{0.5}{\includegraphics{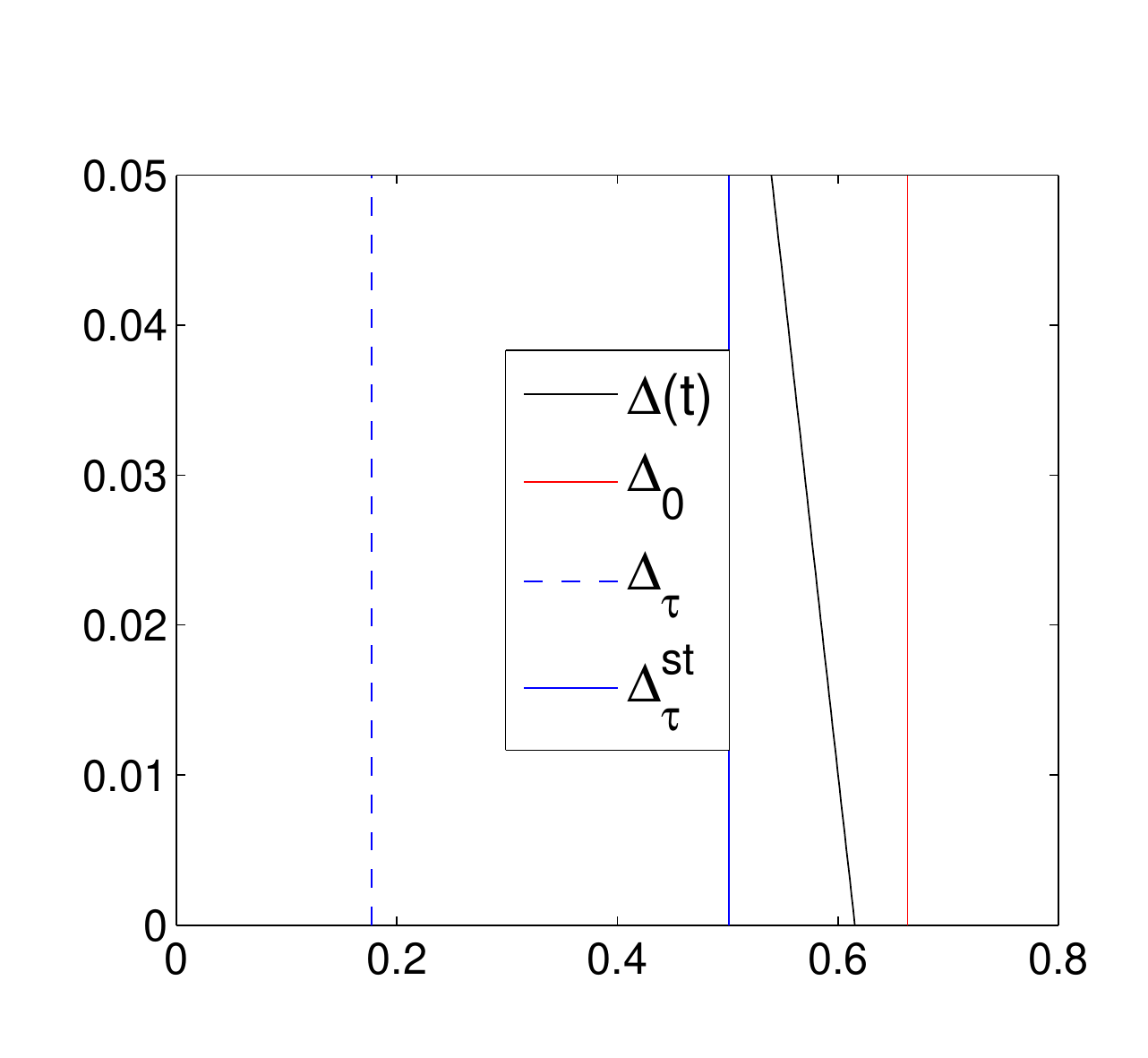}}
}
\subfigure[]{
\scalebox{0.5}{\includegraphics{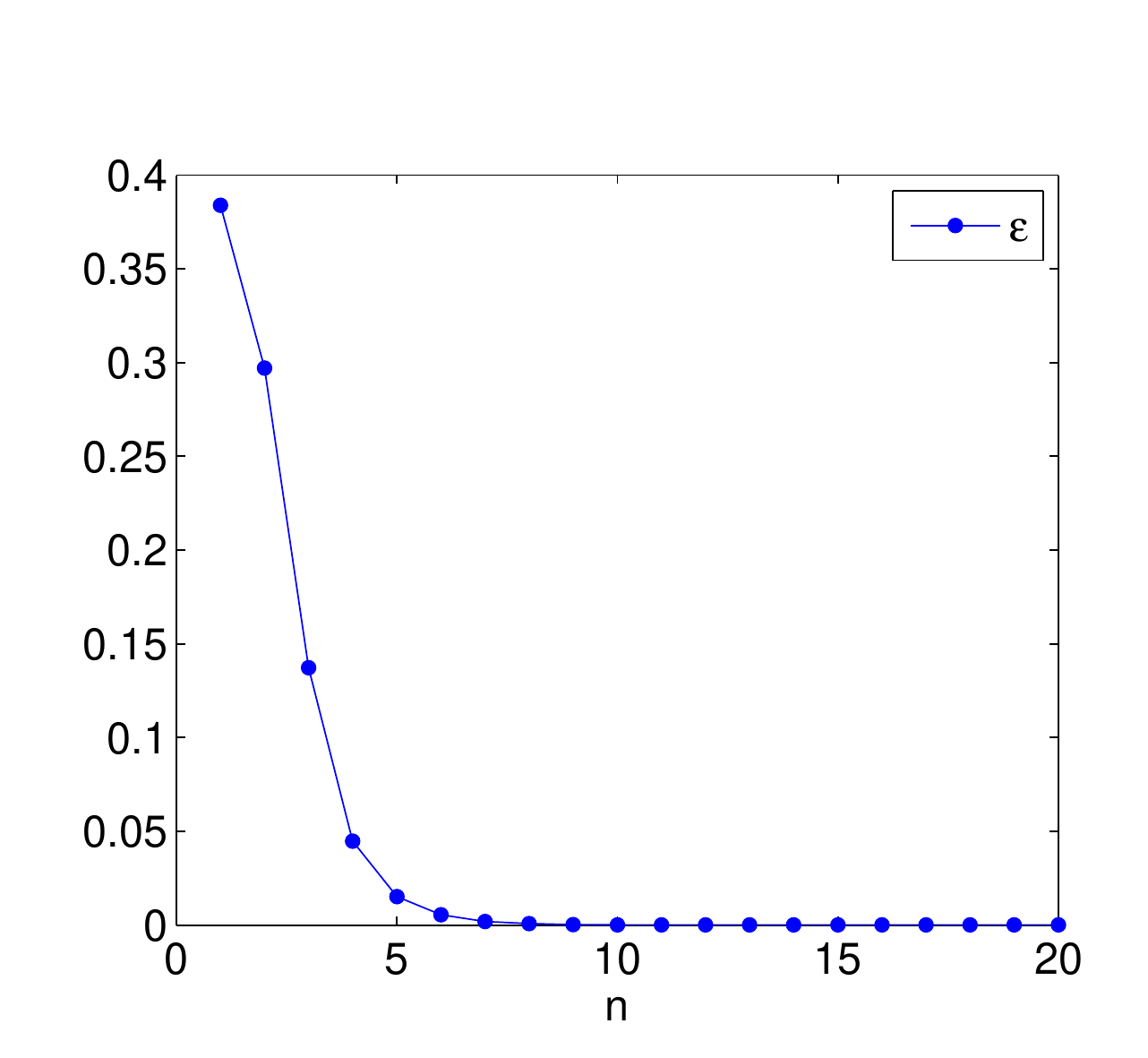}}
}
\caption{
(a) A fixed point of \eqref{x=Ax}, $\Delta(t),$  vertical lines $\Delta_0,$ $\Delta_\tau,$ and$\Delta^{st}_\tau$ as they defined in Section \ref{Sec:Numerics:1}.  The connectivity function $\omega$ is given as in Fig.\ref{Fig1}(a), $f$ is defined by \eqref{eq:logoid} with $p=3,$ $h=0.1,$ $\tau=0.05.$ (b) The errors given as in \eqref{eq:errors:2}.
}\label{Fig6}
\end{figure}

Knowing $\Delta(t)$ we have checked that Assumption 5$'$ is fulfilled. Thus, by Theorem \ref{th:extention2} we can obtain a bump solution to the $f$-field model \eqref{model}.

We claim that that this bump coincides with the bump constructed in Section \ref{Sec:Numerics:1}. To demonstrate this, we found $\delta(t)$ that solves
\begin{equation*}
u^*(\delta(t))=t+h, \quad t\in [0,\tau]
\end{equation*}
with $u^*$ being  the fixed point of the operator $T_f,$ see Fig.\ref{Fig4}(a).
We calculate the relative error as
\begin{equation}
\epsilon=\max\limits_t \left|\frac{\Delta(t)-\delta(t)}{\delta(t)}\right|.
\end{equation}
For our example we have obtained $\epsilon=2.5 \times 10^{-3}.$ We notice here that our implementation is not optimal and can be significantly improved. We do not pursue this problem here, however.
\section{Discussion}
\label{sec:Discussion}

We have introduced two iteration schemes for finding a bump  solution in the $f-$field of the Wilson-Cowan model: The first scheme is based on the fixed point problem formulated by Kishimoto and Amari \cite{KA}. The second one is described by the fixed point problem formulated for the interface dynamics of the bump. The latter formulation became possible due to the special representation of the firing rated function introduced by Coombes and Schmidt \cite{Coombes&Schmidt}.

We have proved using the theory of  monotone operators in Banach spaces that both iteration schemes converge under Assumption \ref{As:1} and \ref{As:2}.
From the iterative procedures we obtain the solution on the finite interval $[\Delta_\tau, \Delta_0]$ (see Section \ref{Sec:II}), and on $[\Delta(0),\Delta(\tau)]$  (see Section \ref{sec:IIa}).
Then it has been  shown that under some additional assumptions on the connectivity function $\omega$ this solution determines a bump of the $f$-field on $\R.$

The assumptions imposed for the first method (see Section \ref{Sec:II}) differ from the ones imposed for the second method (see Section \ref{sec:IIa}). The evident disadvantage of Assumption 3$'$ and 5$'$ is that they contain  information about  the output of the iteration procedure, $\Delta(t)$. Assumption 3$'$ can be substituted with the more restrictive Assumption 3$''$, but not Assumption 5$'$. Thus, Assumption 5$'$ can not be checked in advance. On the other hand, the set of assumptions for the fixed point problem \eqref{x=Ax} is in general less restrictive than the assumptions imposed on the fixed point method outlined in Section \ref{Sec:II}. All assumptions (except Assumption 5$'$) are quite easy to check if $\omega(x)$ is given.

We show by a numerical example that both iterative schemes converge to the same solution. Moreover, from numerics it follows that this solution is unique and stable. Indeed, the maximal and minimal fixed points turn out to be equal for any trials and choice of parameters. Thus, by Corollary \ref{corollary:1}, the fixed point is unique. Moreover, the constructed fixed point solution is stable since it is located between stable solutions of the $f_0$- and $f_\tau$-field, \cite{KA}.
Notice that we have not given a mathematical verification of these observations.

Notice also that we have looked for the bump solutions under the assumption  $\Delta_\tau<\Delta_0$ and \eqref{eq:u-spec}. Thus, even if the constructed solution is unique, it does not necessarily mean that there are no other stable or unstable solution. However, the same type of reasoning as we performed here are no longer valid if we relax on these assumptions. Therefore we leave this problem for a future study.


\section{Acknowledgements}

The authors would like to thank
Professor Stephen Coombes (School of Mathematical Sciences,
University of Nottingham, United Kingdom), and  Professor Vadim Kostrykin (Johannes Gutenberg-University, Mainz, Germany) for many fruitful and
stimulating discussions during the preparation of this paper. John Wyller and Anna Oleynik
also wish to thank the School of Mathematical Sciences,
University of Nottingham for the kind hospitality during the stay.
This research was supported by  the Norwegian University of Life
Sciences. The work has also been supported by The Research Council
of Norway under the grant No.~ 178892 (eNEURO-multilevel modeling
and simulation of the nervous system) and the grant  No.~ 178901
(Bridging the gap: disclosure, understanding and exploitation of the
genotype-phenotype map).

\end{document}